% version 21.9.16 by Anton
% version 22.9.16 by Christian
% version 25.9.16 by Anton
% version 4.6.18 by Anton
% version 8.6. by Christian
% version 8.7. by Anton
% version 13.7.18 by Anton
% version 19.7.18 by Anton (korr)
% version 30.7. by Christian
% version 2.8. by Anton
% version 2.8. by Christian
% version 2.8. by Anton
% version 6.8. by Anton - submitted version to IMA J Numer Analysis
% version 14.2.2019 by Christian, some referee comments addressed
% version 31.10.19 by Anton
% version 2.11.19 by Christian, submitted to BIT

%\documentclass[12pt]{article}
% \documentclass[draft]{siamltex} % no graphics included
\documentclass[final]{siamltex}

\usepackage{amsfonts}
\usepackage{amssymb}
\usepackage{amsbsy}
\usepackage[mathscr]{eucal}

\usepackage{pst-grad, pst-node}
\usepackage{rotating}
\usepackage{epsfig}
\usepackage{graphicx}
\usepackage{graphics}
\usepackage{floatflt}
\usepackage{verbatim}
\usepackage{psfrag}
\usepackage{color}
\usepackage{cases}
%\usepackage[ignoreunlbld]{refcheck} %display labels - remove in final version

%%%%%%%%% Extrait du fichier input.tex

\hbadness=10000
\vbadness=10000

% \textheight 22.5truecm
% \textwidth 15.7truecm
% \footskip 1.7truecm 
% 
% \voffset=-1.5truecm       
% 
% \hoffset=-1.15truecm  

%%%%%%%%%%%%%%%%%%%%%
\def\Section{\setcounter{equation}{0} \setcounter{thm}{0}\section}

\setcounter{secnumdepth}{3}
%%%%%%%%%%%%%%%%%%%%%
\def\be{\begin{equation}\displaystyle}
\def\ee{\end{equation}}
\def\bel{\begin{equation} \displaystyle \begin{array}{l} }
\def\eel{\end{array} \end{equation} }
\def\bell{\begin{equation} \displaystyle \begin{array}{ll}  }
\def\eell{\end{array} \end{equation} }

\def\bea{\begin{eqnarray}}
\def\eea{\end{eqnarray} }
\def\bean{\begin{eqnarray*}}
\def\eean{\end{eqnarray*} }

\def\CC{\mathbb{C}}

\def\RR{\mathbb{R}}

\catcode`@=11
\def\eqalign#1{\null\,\vcenter{\openup1\jot \m@th
   \ialign{\strut \hfil$\displaystyle{##}$ & $\displaystyle{{}##}$\hfil
      \crcr#1\crcr}}\,}
\catcode`@=12

\newtheorem{thm}{Theorem}  
\newtheorem{lem}[thm]{Lemma}
\newtheorem{cor}[thm]{Corollary}

\newtheorem{defi}[thm]{Definition}
\newtheorem{prop}[thm]{Proposition}
\newtheorem{rem}[thm]{Remark}

\def\debthm {\begin{thm}}
\def\finthm {\end{thm}}
\def\deblem {\begin{lem}}
\def\finlem {\end{lem}}
\def\debprop {\begin{prop}}
\def\finprop {\end{prop}}
\def\debcor {\begin{cor}}
\def\fincor {\end{cor}}
\def\debdef {\begin{defi}}
\def\findef {\end{defi}}
\def\debrem {\begin{rem}}
\def\finrem {\end{rem}}

\def\debbox{ 
   \hspace{10mm}\smallskip\newline 
   \hspace*{3mm}\begin{tabular}[b]{|r |l} \hline \\
   \rm\makebox[2mm]{}\begin{minipage}{\noulong}}
\def\finbox{\end{minipage}\\ \\ \hline \end{tabular}}

%%%%
%\catcode`@=11
% \def\thebibliography#1{\noindent {\Large \bf References}\sloppy\nolinebreak
%  \list
%  {[\arabic{enumi}]}{\settowidth\labelwidth{[#1]}\leftmargin\labelwidth
%  \advance\leftmargin\labelsep
%  \usecounter{enumi}}
%  \def\newblock{\hskip .11em plus .33em minus -.07em}
%  \sloppy
%  \sfcode`\.=1000\relax}
%\catcode`@=12
%%%%%%%%%%%%%%

%%% Liste des alias

\def\ds{\displaystyle}

\def\eps{\varepsilon}

\def\bar#1{{\overline #1}}

\def\calO{{\cal O}}

\def\i{{i}} %\def\i{{\bf i}}

\newcommand{\eqref}[1]{(\ref{#1})}

\catcode`@=11
\def\supess{\mathop{\operator@font ess\,sup}}
\catcode`@=12

%----------------------------------------------------------------------------------------------------------

\title{WKB-method for the 1D Schr\"odinger equation in the semi-classical limit: enhanced phase treatment}

\author{Anton Arnold\thanks{Institut f\"ur Analysis und Scientific Computing, Technische Universit\"at Wien, Wiedner Hauptstr. 8-10, A-1040 Wien, Austria ({\tt anton.arnold@tuwien.ac.at}).}
\and Christian Klein\thanks{Institut de Math\'ematiques de Bourgogne, 
Universit\'e de Bourgogne-Franche-Comt\'e, 9 avenue Alain Savary, France.  ({\tt Christian.Klein@u-bourgogne.fr}).}
\and Bernhard Ujvari\thanks{Institut f\"ur Analysis und Scientific Computing, Technische Universit\"at Wien, Wiedner Hauptstr. 8-10, A-1040 Wien, Austria ({\tt e0326211@student.tuwien.ac.at}).}
}

\begin{document}

\maketitle

\begin{abstract}
This paper is concerned with the efficient numerical computation of solutions 
to the 1D stationary Schr\"odinger equation in the semiclassical 
limit in the highly oscillatory regime. A previous approach to this problem 
based on explicitly incorporating the leading terms of the WKB 
approximation is enhanced in two ways: first a refined error analysis 
for the method is presented for a not explicitly known WKB phase, and 
secondly the phase and its derivatives will be computed with spectral 
methods. The efficiency of the approach is illustrated for several 
examples. 
\end{abstract}

\begin{keywords}
 Uniformly accurate scheme, Schr\"odinger equation, highly oscillating wave functions, higher order WKB-approximation, asymptotically correct finite difference scheme, spectral methods
\end{keywords}

\begin{AMS} 35Q40, 81Q20, 65M70, 65L11
 
\end{AMS}

\pagestyle{myheadings}
\thispagestyle{plain}
\markboth{A. ARNOLD, C. KLEIN AND B. UJVARI}{WKB-method for the 1D Schr\"odinger equation
}
%-------------------------------------------------------------------------------------------------------------

\Section{Introduction} \label{SEC1}
%%%%%%%%%%%%%%%%%%%%%%
This paper is concerned with the  numerical solution of highly oscillating differential equations of the type
\be \label{EQ_ref}
\eps^2 \varphi''(x) + a(x) \varphi(x) =0\,,\quad x\in\RR\,,
\ee
%(Schr\"odinger or Helmholtz equation in 1D), 
where $0<\eps \ll 1$ is a very small parameter and $a(x)\ge a_0>0$ a sufficiently smooth function. 
Such models play an important role in electromagnetic and acoustic scattering (1D Maxwell and Helmholtz equations in the high frequency regime), as well as wave propagation problems in quantum and plasma
physics. % (Schr\"odinger equation in the semi-classical regime).
More concretely, \eqref{EQ_ref} has been used for the simulation of electron transport in nano-scale semiconductor devices (like 1D quantum models of resonant tunneling diodes \cite{SHMS98, MJK13} or for the longitudinal mode(s) in 2D and 3D quantum waveguides \cite{lent}). In such applications $\eps:=\hbar/\sqrt{2m}$ is proportional to the (reduced) Planck constant $\hbar$, $a(x):=E-V(x)$, with some prescribed, real valued, electrostatic potential $V$, and $E\in\RR$ is the injection energy of the electrons (with effective mass $m$) into the device from the metallic leads on both sides. Since $E$ may take arbitrarily large values, \eqref{EQ_ref} appears as a Schr\"odinger equation in the semi-classical regime, i.e.\ for $\eps\to0$.

%For very small $\eps>0$,
For $\eps\ll1$ the wave length $\lambda={2\pi\eps
\over \sqrt{a(x)}}$ is very
small, such that the solution $\varphi$ becomes highly oscillating. 
In classical ODE--schemes (like in \cite{Ba1, Ba2}) such a situation
requires a very  fine mesh in order to accurately resolve 
the oscillations, see Fig. \ref{onde}.
Hence, standard numerical methods would be very costly and inefficient here. 

A possible remedy %to the above mentioned inefficiency of standard methods 
is to use analytic a-priori information on the solution to simplify its numerical treatment. 
Within this spirit, various numerical strategies for highly oscillatory problems from quantum mechanics or of Hamiltonian structure have been developed in recent years. One group of approaches is based on 
\emph{adiabatic integrators} (see \cite{JL03,Ja04,lub}; \S XIV of \cite{HLW}), which is closely related to using the zeroth order WKB-approximation (cf.\ \cite{LL85}) $\varphi(x)\approx C \exp\big(\pm \frac{i}{e}\int_0^x \sqrt{a(\tau)}\,d\tau\big)$ to eliminate the dominant oscillations; the resulting smoother problem is then solved numerically. A localized variant of this transformation is also the background for the \emph{modified Magnus method} developed in \S5 of \cite{Is02}. 
But, according to the detailed numerical comparison in \S5.1 of \cite{lub}, their \emph{adiabatic Magnus method} is actually even more accurate than both the standard \cite{IMKNZ00, MN08} and modified Magnus methods \cite{Is02}, particularly for very small $\eps$.
\emph{Modulated Fourier expansions} are closely related techniques that apply to the case $a=const$, but allowing for nonlinearities in the ODE (see e.g.\ \S XIII of \cite{HLW}; \cite{CHL03}).

Other numerical approaches for \eqref{EQ_ref} include a macroscopic reformulation \cite{DGM07} and, more recently, WKB-approximations (see \cite{Cla,ABN11,HLH16}). The last three papers are concerned only with the oscillatory case (i.e.\ $a(x)>0$). The evanescent case (with $a(x)<0$) is equally important in applications (e.g. for quantum tunneling), but needs a different numerical approach, see \cite{AN16} for a FEM with WKB-type basis functions. For the inclusion of a turning point we refer to \cite{AD18}.

WKB-based methods rest upon using asymptotically correct solution formulas for \eqref{EQ_ref} in the semi-classical limit $\eps\to0$, in order to simplify the numerical problem. 
With the WKB-ansatz 
\begin{equation}\label{WKB}
  \varphi(x) \sim \exp\left(\frac1\eps \sum_{p=0}^\infty \eps^p\phi_p(x)\right)
\end{equation}
and a comparison of coefficients one obtains the well-known WKB-approximation in arbitrary order of accuracy w.r.t.\ powers of $\eps$ (cf.\ \cite{LL85}). 
This a-priori knowledge on the (complex valued) solution $\varphi(x)$ 
allows to separate its scales: The microscale behavior, involving the high oscillations of $\varphi$ (or its argument, $\arg \varphi$) are analytically rather accurately known. Hence, it can be eliminated from the problem to a large extent. But the macroscale behavior, corresponding to the smooth variations of $|\varphi(x)|$ has to be obtained numerically. W.r.t.\ this scale separation, the numerical WKB-method has a conceptual similarity to the \emph{heterogeneous multiscale method} (see \cite{EELRVE07} for a review). In the WKB-method, however, it is not necessary to solve the microscale problem numerically -- its contribution can be obtained analytically.

In this paper we shall extend and refine the asymptotically correct (as $\eps\to0$) WKB-scheme from  \cite{ABN11}. It is based on the second order WKB-approximation for \eqref{EQ_ref} which reads
\be\label{2orderWKB}
  \varphi(x)\approx C \,\frac{\exp\left(\pm\frac{\i}{\eps}\int_0^x\big[\sqrt{a(\tau)}-\eps^2\beta(\tau)\big]\,d\tau  \right)}{\sqrt[4]{a(x)}}\,,
\ee
with 
\be\label{beta-def}
  \beta :=-\frac{1}{2a^{1/4}}(a^{-1/4})''\,.
\ee
Since \cite{lub} is also based on a WKB-approximation (of zero order), its strategy is closely related to the procedure in \cite{ABN11}. But the latter paper yields a refinement to higher $\eps$--order. This improvement is illustrated by the numerical comparison in \S7.2 in \cite{Geier}.

%In \cite{ABN11} the authors introduced a WKB-based method that allows for an efficient and accurate solution of \eqref{EQ_ref} in the semi-classical regime. 
The hybrid method from \cite{ABN11} consists of two steps: first an analytic preprocessing of \eqref{EQ_ref} to transform it into a smoother, i.e.\ less oscillatory problem. Then, the numerical solution of the transformed ODE problem can be carried out on a coarse grid with high accuracy. It is based on a truncated Picard iteration. This explicit representation of an approximate solution involves oscillatory integrals with two small parameters, $\eps$ and the step size $h$. Hence, the key issue in the numerical step is to construct approximations of these integrals that are $\eps$-uniform as well as accurate enough w.r.t.\ $h$, since the latter determines the local discretization error for the ODE scheme. In \cite{ABN11} both a first and second order method (w.r.t.\ $h$) were derived.

\begin{figure}[htbp]
\begin{center}
\includegraphics[width=6cm]{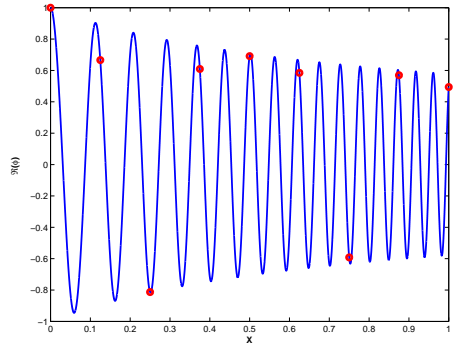}
\end{center}
%\vspace{-0.3cm}
\caption{\label{onde} {\footnotesize In standard numerical methods
highly oscillating solutions require a very fine mesh to resolve the oscillations.
However, with the analytic pre-processing of the WKB-marching method from \cite{ABN11} an accurate solution can be obtained on a coarse grid
(dots). Plotted is the solution $\Re[\varphi(x)]$ of \eqref{EQ_ref} with $\eps=0.01,\,h=0.125,$ and $a(x)=(x+\frac12)^2$.}}
%as in \S \ref{SEC33}.}}
\end{figure}

In the analytic step, the second order differential equation (\ref{EQ_ref}) is first transformed to a system of first order differential equations using the still oscillatory variable 
\be\label{U-trafo}
U(x)=\left(
\begin{array}{c}
 u_1\\[2mm]
\ds u_2
\end{array}
\right)
:=\left(
\begin{array}{c}
 a^{1/4} \varphi(x)\\[2mm]
\ds \frac{\varepsilon (a^{1/4}\varphi)'(x)}{\sqrt{a(x)}}
\end{array}
\right)\,.
\ee
In the essential second step, the dominant oscillations are transformed out of $U(x)$, yielding a first order ODE for a new, smooth variable $Z(x)$. This transformation is made precise in \S\ref{SEC21}, and the numerical solution of the ODE for $Z(x)$ is presented in \S\ref{SEC22}.

The numerical analysis of the WKB-method in \cite[Th.\ 3.1]{ABN11} led to the following error estimates for the first and, resp., second order methods:
\be \label{error_U}
||U(x_n)-U_n||_{} \le C {h^{\gamma}
\over \eps} +C \eps^2  \min(\eps,h)\,,\quad 1\le n\le N\,,
\ee
and
\be \label{error_U_2ORD}
||U(x_n)-U_n||_{} \le C {h^{\gamma}
\over \eps} +C \eps^3 h^2\,, \; 1\le n\le N\,,
\ee 
where $U_n$ is a numerical approximation for the solution $U(x)$ at the grid point $x_n$. Here and in the sequel, $C$ denotes generic, but not necessarily equal constants that are independent of grid index $n$, $h$, and $\eps$.
Moreover, $\gamma >0$ is the order of the chosen numerical integration
method for computing the phase integral 
\be \label{PH}
\phi(x):= \int_0^x \left(\sqrt{a(\tau)} -\eps^2 \beta(\tau) \right) d\tau \,,
\ee
which is a smooth function of both $x$ and $\eps$.
Here, %$\beta :=-\frac{1}{2a^{1/4}}(a^{-1/4})''\,,
%a''\over 8 a^{3/2}} -{5 (a')^2\over 32 a^{5/2}
% and 
$\|.\|$ denotes any vector norm in $\CC^2$.

The second terms in \eqref{error_U} and \eqref{error_U_2ORD} are the errors due to the WKB method, and they decay like $\calO(\eps^3)$, even when the step size $h$ is kept constant. By contrast, the first term is critical in the semi-classical limit as it grows for $\eps\to0$. This error is due to using an approximate phase for the analytic transformation step (from $U$ to $Z$ and back). 
To limit the size of the first term one has two options: One can either choose an $\eps$-dependent step size (like $h=\calO(\sqrt\eps)$ when using e.g.\ the Simpson rule with $\gamma=4$ for the phase computation, cf.\ \cite{lub,ABN11}). Note that this restriction is much weaker than having to resolve each oscillation (by using $h=\calO(\eps)$). Moreover it would then render the numerical scheme still second order in $h$, uniformly for $\eps$ satisfying $h=\calO(\sqrt\eps)$, see \eqref{error_U_2ORD}. 
Alternatively one can use a highly accurate method to compute the phase \eqref{PH} -- like a spectral method, and this will be our approach here.

The goal of this paper is twofold: In \cite{ABN11}, numerical errors in the phase were only considered for the backward transformation from $Z_n$, the numerical approximation of $Z$, to $U_n$. Hence we shall first complete that error analysis by taking into account that also the first (analytic) transformation (from \eqref{EQ_ref} to the ODE for $Z$) is typically affected by an inaccurate phase. Secondly, we shall combine the WKB-method with a spectral method for computing the phase function $\phi(x)$, yielding spectral accuracy. With little effort, this will reduce the first term in the error estimates \eqref{error_U} and \eqref{error_U_2ORD} to the order: $eps*\mathcal O(\eps)$, where $eps$ denotes the machine precision. This makes these error components irrelevant for most practical computations.

This paper is organized as follows: In \S\ref{SEC2} we briefly review the WKB-method from \cite{ABN11}, and in \S\ref{SEC3} we extend the error analysis to include the phase error in $\phi$. In \S\ref{SEC4} we review the Chebychev collocation method along with the Clenshaw-Curtis algorithm to compute the phase integral. In \S\ref{SEC5} we illustrate the efficiency of the combined WKB-spectral method on some numerical examples, and we conclude in \S\ref{SEC6}.

%%%%%%%%%%%%%%%%%%%%%%%%%%%%%%%%%%%%%%%%%%%%%%%%%%%%%%%%%%%%%%%%%%%%%%%%%%%%%%%%%%%%
\Section{Review of the WKB-method for the stationary Schr\"odinger equation} \label{SEC2}
%%%%%%%%%%%%%%%%%%%%%%%%%%%%%%%%%%%%%%%%%%%%%%%%%%%%%%%%%%%%%%%%%%%%%%%%%%%%%%%%%%%%

In this section we briefly review the WKB-based numerical method from \cite{ABN11} for solving the following scalar, highly oscillating initial value problem (IVP):
\begin{equation} \label{SchIVP}
\left\{
\begin{array}{l}
\displaystyle \varepsilon^2 \varphi''(x) + a(x) \varphi(x)= 0 \,, \quad x \in (0,1)\,,\\[2mm]
\ds \varphi(0)=\varphi_0\,,\\[2mm]
\ds \varepsilon \varphi'(0)=\varphi_1 \,,
\end{array}
\right.
\end{equation}
with possibly complex valued initial conditions. For the rest of the paper we make the following assumptions on the coefficient function $a(x)$:\\

{\bf Hypothesis A} {\it Let $a \in C^{\infty}[0,1]$ be a fixed smooth (real valued) function, satisfying  $a(x) \ge a_0 >0$ in $[0,1]$, which means that we are in the oscillatory regime.\\ 
Besides, let $0 < \eps<\eps_0$ be an arbitrary real number with 
$$
  0<\eps_0<\eps_1 := \min\big\{ 1,\,\min_{x\in[0,1]} [a(x)^{1/4}\beta_+(x)^{-1/2}] \big\},
$$
where $\beta_+(x):=\max(0,\beta(x))$.
}
\medskip

This hypothesis implies that there are positive constants $C_0$, $C_1$ such that
$$
  0<C_0\le\phi'(x)=\sqrt{a(x)}-\eps^2\beta(x)\le C_1\,,\quad x\in[0,1]\,,
$$
and that $\phi''\in L^\infty(0,1)$.

Clearly, $a$ could only be piecewise $C^\infty$. Then the ODE problem could just be restarted at an interface point of reduced regularity.\\

The WKB-method consists of two steps: first an analytic transformation of \eqref{SchIVP} into a smoother problem, and then the numerical discretization of the latter.

%%%%%%%%%%%%%%%%%%%%%%%%%%%%%%%%%%%%%%%%%%%%%%%%%%%%%%%%%%%%%%%%%%%%%%%%%%%%%%%%%%%%%%%%
\subsection{Reformulation of the continuous problem} \label{SEC21}

This transformation involves three steps: Using the substitution \eqref{U-trafo},
the ODE (\ref{SchIVP}) is first transformed to a system of first order differential equations: 
\begin{equation} \label{EQU}
\left\{
\begin{array}{l}
\ds U'(x)=\left[ \frac{1}{\eps} A_0(x)+\eps A_1(x)\right] U(x)\,,\quad 0<x<1\,,\\[3mm]
\ds U(0)=U_I\,,
\end{array}
\right.
\end{equation}
with the two matrices
$$
A_0(x)=
\sqrt{a(x)}
\left(
\begin{array}{cc}
0&1\\
-1&0
\end{array}
\right)
\,; \quad A_1(x)=
\left(
\begin{array}{cc}
0&0\\
2 \beta(x)&0
\end{array}
\right)\, .
$$
\begin{comment}
\begin{figure}[htbp]
\begin{center}
\includegraphics[width=6.0cm]{psi_bis-modif.eps}\hfill
\includegraphics[width=6.0cm]{Z_bis-modif.eps}
\end{center}
\vspace{-0.3cm}
\caption{\label{fig_UZ} {\footnotesize Left: solution $\Re\varphi(x)$ of \eqref{SchIVP} for 2 values of $\eps$. Right: solution $\Re z_1(x)$ of \eqref{EQZ}. The simulation is performed for the same potential $a=(x+1/2)^2$ as in \S\ref{SEC33}.} 
}
\end{figure}
\end{comment}
Next the dominant part (w.r.t.\ $\eps$) of the resulting system matrix in \eqref{EQU}, i.e.\ $\frac1\eps A_0$, 
is diagonalized by the following change of variable:
$$Y(x) := P U(x)\,,$$
with the unitary matrix 
$$P := {1\over \sqrt{2}}
\left(
\begin{array}{cc}
i&1\\
1&i
\end{array}
\right)\quad; \quad P^{-1} = {1\over \sqrt{2}}
\left(
\begin{array}{cc}
-i&1\\
1&-i
\end{array}
\right).
$$
\begin{comment}
The equation for $Y$ reads
\begin{equation} \label{EQY}
\left\{
\begin{array}{l}
\ds {dY \over dx} = {i \over\eps} D^\eps Y + \eps N Y\,,\quad 0<x<1\,,\\[3mm]
\ds Y(0)=Y_I\,,
\end{array}
\right.
\end{equation}
with
$$D^\eps(x) = 
 \left(
\begin{array}{cc}
\!D_1^\eps(x)&0\\
0&\!\!D^\eps_2(x)\!
\end{array}
\right)=
\left(
\begin{array}{cc}
\!\sqrt{a} - \eps^2 \beta&0\\
0&\!\!-\sqrt{a} + \eps^2 \beta\!
\end{array}
\right);\,
%$$
%$$
N(x) = \beta(x) \left(
\begin{array}{cc}
0&1\\
1&0
\end{array}
\right).
$$
Again, the matrix ${i \over\eps} D^\eps$ gives rise to highly oscillatory solutions, while 
$\eps N$ is a small perturbation.
\end{comment}
The final transformation step eliminates the leading oscillations 
by using the diagonal matrix
$$\Phi (x) := 
\left(
\begin{array}{cc}
\ds \phi &0\\
0&-\phi \end{array}
\right)\,,
$$
with the $\eps$-dependent, (real valued) phase function $\phi(x)$.
The change of unknown
$$Z(x) =\left(
\begin{array}{c}
 z_1\\[2mm]
\ds z_2
\end{array}
\right) 
%:= \exp \left(-{i\over \eps} \Phi^\eps(x) \right) Y(x)\,,$$ 
:= e^{-{i\over \eps} \Phi(x)} Y(x)\,,$$ 
finally leads to the system
\begin{equation} \label{EQZ}
\left\{
\begin{array}{l}
\ds {dZ\over dx} = \eps B Z\,,\quad 0<x<1\,,\\[3mm]
\ds Z(0)=Z_I=Y_I=P\,U_I\,.
\end{array}
\right.
\end{equation}
Here, the matrix 
\be\label{B-def}
B(x) = 
\beta(x) \left(
\begin{array}{cc}
0&e^{-\frac{2i}{ \eps} \phi(x)}\\
e^{\frac{2i}{ \eps} \phi(x)}&0
\end{array}
\right)
\ee
is off-diagonal, $\eps$-dependent -- in fact highly oscillatory, but bounded independently of $\eps$.
\begin{comment}
$$N^\eps = \exp ( -{i\over \eps} \Phi^\eps )\, N \, \exp ( {i\over
\eps} \Phi^\eps )\,,$$
is bounded independently of $\eps$. It is off-diagonal, with the entries 
$$
N^\eps_{1,2} (x)= {\beta}(x) 
%\exp{\left(-{2i\over \eps} \int_0^x ( \sqrt{a(\tau)} - \eps^2 \beta(\tau)) \,d\tau \right)},
e^{-\frac{2i}{ \eps} \phi^{\eps}(x)}\,,\quad
%$$
%$$
N^\eps_{2,1} (x)= {\beta}(x) e^{\frac{2i}{ \eps} \phi^{\eps}(x)}\,.
%\exp{\left( {2i\over \eps} \int_0^x
%(\sqrt{a(\tau)}-\eps^2 \beta(\tau)) \,d\tau\right)}.
$$
\end{comment}

The principal idea of the WKB-method for $\eps\ll1$ is as follows: Instead of solving the highly oscillatory problem (\ref{SchIVP}) on a fine mesh, one solves numerically the smooth problem 
(\ref{EQZ}) on a coarse mesh, possibly with $h>\eps$. Then the original solution is recovered by
\be \label{OI}
U(x)=P^{-1} e^{{i\over \eps} \Phi(x)} Z(x)\,.
\ee

As in \cite{ABN11}, the above transformation assumes that the phase $\phi(x)$ is known exactly on the considered interval $[0,1]$. In special cases (like piecewise linear coefficient functions $a(x)$) this is indeed possible (since $\sqrt a$ and $\beta$ are then explicitly integrable). Then, the crucial first error terms in \eqref{error_U}, \eqref{error_U_2ORD} would be absent. But in general, $\phi$ has to be obtained by a numerical quadrature, yielding an approximation $\tilde\phi(x)$. This approximate phase will be used only in the final numerical method
%, i.e.\ in the marching method for $Z$ and in \eqref{OI} with $\tilde\Phi := \diag(\tilde\phi,-\tilde\phi)$. Of course, it also appears 
and in our error analysis, which generalizes the analysis from \cite{ABN11}. 

\begin{comment}
Using $\tilde\phi$ leads to the new change of unknown
$$\tilde Z(x) =\left(
\begin{array}{c}
 \tilde z_1\\[2mm]
\ds \tilde z_2
\end{array}
\right) 
%:= \exp \left(-{i\over \eps} \Phi^\eps(x) \right) Y(x)\,,$$ 
:= e^{-{i\over \eps} \tilde\Phi(x)} P\,U(x)\,,\qquad
\tilde\Phi := \diag(\tilde\phi,-\tilde\phi)\,,
$$ 
and $\tilde Z$ satisfies the system
\begin{equation} \label{EQtildeZ}
\left\{
\begin{array}{l}
\ds {d\tilde Z\over dx} = \eps \tilde N\tilde Z\,,\quad 0<x<1\,,\\[3mm]
\ds \tilde Z(0)=\tilde Z_I=Z_I=P\,U_I\,,
\end{array}
\right.
\end{equation}
with the matrix 
$$\tilde N = 
\beta(x) \left(
\begin{array}{cc}
0&e^{-\frac{2i}{ \eps} \tilde\phi(x)}\\
e^{\frac{2i}{ \eps} \tilde\phi(x)}&0
\end{array}
\right)\,.
$$
\end{comment}

%%%%%%%%%%%%%%%%%%%%%%%%%%%%%%%%%%%%%%%%%%%%%%%%%%%%%%%%%%%%%%%%%%%%%%%%%%%%%%%%%%%%%%%%
\subsection{Numerical discretization of the transformed problem} \label{SEC22}
%%%%%%%%%%%%%%%%%%%%%%
In this section we review the discretizations of \eqref{EQZ}, as developed in \cite{ABN11}. 
Let $0=x_1 < \cdots <x_n < \cdots <x_N=1$ be a discretization of the
interval $(0,1)$ and $h:= \max_{n=1,\cdots, N-1} |x_{n+1}-x_n |$. 
%In the following, for simplicity, we shall often denote the cell
%$I_n:=(x_n, x_{n+1})$ simply by $(\xi,\eta)$, i.e. $\xi=x_n$ and $\eta=x_{n+1}$.
The marching method from \cite{ABN11} is based on two steps, firstly a truncated Picard iteration for \eqref{EQZ} with $\bar p=1,\,2$ (corresponding to first and second order schemes, respectively):
\be \label{Decomp_Z_bis}
Z(x_{n+1}) = Z(x_{n}) + \sum_{p=1}^{\bar p} \eps^p M_p(x_{n+1};x_{n}) Z(x_{n})\,,
\ee
with the matrices
\be \label{EXP_M}
  M_1(x_{n+1};x_{n}) = \int_{x_{n}}^{x_{n+1}} B(y) \, dy\,,\quad
  M_2(x_{n+1};x_{n}) = \int_{x_{n}}^{x_{n+1}} \int_{x_{n}}^{y_1}B(y_1)B(y_2) \, dy_2dy_1\,, ...
\ee

In the second step numerical approximations for $M_1$ and $M_2$ have to be found.
Since the matrix $B$ is highly oscillatory w.r.t.\ $\eps$, the key issue is to find an $\eps$--uniform discretization of the resulting oscillatory integrals. 
Since $B\in\CC^{2\times2}$ is an off-diagonal matrix, the matrices $M_p$ are alternatingly off-diagonal (Hermitian) and diagonal (with complex conjugate entries).
Their entries are (iterated) oscillatory integrals (without stationary points).
We remark that there is a vast literature on the numerical treatment of oscillatory integrals (see \cite{INO06, Ol06}, e.g.). But there, the standard setting is to treat an integral on a fixed interval, where the integrand involves a small parameter (corresponding to $\eps$ in our case).
But our application actually involves two small parameters, $\eps$ and $h$, since \eqref{Decomp_Z_bis}, \eqref{EXP_M} constitute a stepping method for the ODE \eqref{SchIVP}. Hence, the numerical errors for the oscillatory integrals \eqref{EXP_M} must also be small in $h$, in fact of the order $\mathcal O(h^2)$ and $\mathcal O(h^3)$ (for a first and second order scheme, resp.). Due to this additional requirement, standard approaches like the \emph{asymptotic method} \cite{INO06} do not apply here (directly). In \S2.2 of \cite{ABN11} a variant of this latter method was presented that allows to ``trade in'' $h$--powers in the error for $\eps$--powers. Next we summarize the resulting method, and refer the reader to \cite{ABN11} for a detailed derivation.

To present the discrete analogs of \eqref{Decomp_Z_bis}-\eqref{EXP_M} we need some notation. We define the following functions
$$
H_1(x) := e^{ix} -1\,,\qquad H_2(x) := e^{ix} -1-ix\,, 
$$
\be\label{def-betak}
\quad\beta_0(y) := \frac{\beta}{2 \phi'} (y)= \frac{\beta}{2 (\sqrt a -\eps^2\beta)} (y), 
\quad \beta_{k}(y) := \frac{1}{2 \phi'(y)} {d\over dy}\left( {\beta_{k-1} }\right)(y),\quad k=1,2,3\,,
\ee
and the phase increments 
$$
S_{n}\! := \phi(x_{n+1}) - \phi(x_{n})
=\! \int_{x_n}^{x_{n+1}} \!\!\! \left(\sqrt{a(\tau)}-\eps^2 \beta(\tau)\right) d\tau.
$$

Now we recall from \cite{ABN11} the two marching methods for the vector $Z$. 

\noindent
{\bf First order scheme.\,} 
Let $Z_1:=Z_I$ be the initial condition and let $n=1,\cdots,N-1$. Then we define
\be \label{1ORD}
Z_{n+1} := (I + A_{n}^1) Z_{n}\,,
\ee
with the $2 \times 2$ matrix
\be \label{1ORD_A}
\!\!\!\!\!\!\!\!\begin{array}{lll}
\ds &&\!\!\!\!A_{n}^1 := \ds \eps^3 \beta_1(x_{n+1})\left(\!
\begin{array}{cc}
0& e^{-{2i \over \eps} \phi(x_{n})} H_1(-{2 \over \eps} S_{n})\\
e^{{2i \over \eps} \phi(x_{n})} H_1({2 \over \eps} S_{n})&0
\end{array}
\! \right)\\[6mm]
&&\ds\! - i \eps^2\! \left(\!\!\!
\begin{array}{cc}
0&\hspace{-7mm} \beta_0(x_{n})e^{-{2i \over \eps} \phi(x_{n})} -\beta_0(x_{n+1})e^{-{2i \over \eps} \phi(x_{n+1})}\\
\beta_0(x_{n+1}) e^{{2i \over \eps} \phi(x_{n+1})} -\beta_0(x_{n})e^{{2i \over \eps} \phi(x_{n})}
&\hspace{-7mm}0
\end{array}
\!\!\!\right)\!.
\end{array}
\ee

\noindent
{\bf Second order scheme.\,} \qquad Let $Z_1:=Z_I$ be the initial condition and let $n=1,\dots,N-1$. Then we define
\be \label{2ORD}
Z_{n+1} := (I + A_{n}^2+ A_{n}^3) Z_{n}\,,
\ee
with the matrices
\be \label{2ORD_A1}
\begin{array}{lll}
\ds  A_{n}^2:=& \\[3mm]
&&\hspace{-15mm} - i \eps^2 \!\!\left(\!\!
\begin{array}{cc}
0&\hspace{-8mm} \beta_0(x_{n})e^{-{2i \over \eps} \phi(x_{n})} -\beta_0(x_{n+1})e^{-{2i \over \eps} \phi(x_{n+1})}\\[3mm]
\beta_0(x_{n+1}) e^{{2i \over \eps} \phi(x_{n+1})} -\beta_0(x_{n})e^{{2i \over \eps} \phi(x_{n})}
&\hspace{-8mm} 0
\end{array}
\!\!\!\right)\\[6mm]
&&\hspace{-15mm} \ds + \eps^3 \!\!\left(\!\!
\begin{array}{cc}
0&\hspace{-8mm} \beta_1(x_{n+1})e^{-{2i \over \eps} \phi(x_{n+1})} -\beta_1(x_{n})e^{-{2i \over \eps} \phi(x_{n})}\\[3mm]
\beta_1(x_{n+1}) e^{{2i \over \eps} \phi(x_{n+1})} -\beta_1(x_{n})e^{{2i \over \eps} \phi(x_{n})}
&\hspace{-8mm} 0
\end{array}
\!\!\right)\\[6mm]
&&\hspace{-15mm} \ds + i \eps^4 \beta_2(x_{n+1})  \left(
\begin{array}{cc}
0& -e^{-{2i \over \eps} \phi(x_{n})} H_1(-{2 \over \eps} S_{n})\\[3mm]
e^{{2i \over \eps} \phi(x_{n})} H_1({2 \over \eps} S_{n})&0
\end{array}
\! \right)\\[6mm]
&&\hspace{-15mm} \ds -  \eps^5 \beta_3(x_{n+1})  \left(
\begin{array}{cc}
0& e^{-{2i \over \eps} \phi(x_{n})} H_2(-{2 \over \eps} S_{n})\\[3mm]
e^{{2i \over \eps} \phi(x_{n})} H_2({2 \over \eps} S_{n})&0
\end{array}
\! \right)\ ,
\end{array}
\ee
\be \label{2ORD_A2}
\begin{array}{lll}
\ds  A_{n}^3&:=& \ds - i \eps^3 (x_{n+1} -x_n) { \beta(x_{n+1}) \beta_0(x_{n+1}) +\beta(x_{n}) \beta_0(x_{n}) \over 2} 
\left(
\begin{array}{cc}
\ds 1&0\\
\ds  0&\ds -1
\end{array}
\right)\\[6mm]
&& \ds - \eps^4 \beta_0(x_{n}) \beta_0(x_{n+1}) 
\left(
\begin{array}{cc}
\ds H_1(-{2\over \eps}S_{n})&0\\[5mm]
\ds 0& \ds  H_1({2\over \eps}S_{n})
\end{array}
\right)\\[7mm]
&& \ds +i  \eps^5\beta_1(x_{n+1})  [\beta_0(x_{n})-\beta_0(x_{n+1})] \left( 
\begin{array}{cc}
\ds H_2(-{2\over \eps}S_{n})
&\ds 0\\[4mm]
\ds 0
&\ds -H_2({2\over \eps}S_{n})
\end{array}
\right)\,.
\end{array}
\ee 

In order to compute now the numerical approximation of (\ref{EQU}) and thus to obtain the wave function $\varphi$ as well as its derivative $\varphi'$, we have to transform back via
\be \label{Transfo_ZU}
U_n=P^{-1}e^{{i \over \eps} \Phi (x_n)} Z_n\,, \quad  n=1,...,N\,.
\ee

In these schemes we assumed so far that the phase $\phi$, and the functions $\beta$ and $\beta_k$, $k=0,1,2,3$ (which involve up to five derivatives of $a$) are explicitly ``available''. For $\beta$ and $\beta_k$ this is feasible, as they involve only derivatives of $a$. 
%Alternatively, $a'$ and $a''$ could be approximated numerically. 
But typically the phase and phase increment have to be replaced by their numerical approximates $\tilde\phi$ and $\tilde S_n:= \tilde\phi(x_{n+1}) - \tilde\phi(x_{n})$. At this point we have two options: On the one hand we could use the approximate $\tilde\phi$ along with the exact functions $a$, $\beta$, and $\beta_k$. 
%But this would then not be an exact WKB-scheme (from \cite{ABN11}) for some perturbed problem. 
On the other hand it appears more consistent to combine $\tilde\phi$ with its exact derivatives that lead to numerical approximates $\tilde a$, $\tilde \beta$, $\tilde \beta_k$. Since we shall use a spectral integration of the phase (cf.\ \S\ref{SEC4}), the exact derivatives of $\tilde\phi$ are readily available. We shall use the second option here, as this will simplify the error analysis in \S\ref{SEC3}. Moreover, the error plots of these two versions are almost indistinguishable.

With the replacements $\tilde\phi$, $\tilde S_n$, $\tilde \beta$, and $\tilde \beta_k$, the above matrices \eqref{1ORD_A}, \eqref{2ORD_A1}, and \eqref{2ORD_A2} shall be called $\tilde A_n^1$, $\tilde A_{n}^2$, and $\tilde A_n^3$.
Hence, the first and second order methods with approximate phase read
\be \label{1ORDtilde}
\tilde Z_{n+1} := (I + \tilde A_{n}^1) \tilde Z_{n}\,, \quad  n=1,...,N-1\,,
\ee
and
\be \label{2ORDtilde}
\tilde Z_{n+1} := (I + \tilde A_{n}^2+ \tilde A_{n}^3) \tilde Z_{n}\,, \quad  n=1,...,N-1\,.
\ee
For both methods, the corresponding back-transformation to the variable $U$ then reads
\be \label{Transfo_ZU_tilde}
\tilde U_n=P^{-1}e^{{i \over \eps} \tilde\Phi(x_n)} \tilde Z_n\,, \quad  n=1,...,N\,,
\ee
with $\tilde\Phi := \diag(\tilde\phi,-\tilde\phi)$. 
Note that these two WKB-schemes were introduced as a numerical approximation of the schemes \eqref{1ORD}, \eqref{2ORD} for the (transformed) Schr\"odinger equation \eqref{EQU} with the (unperturbed) coefficients $a$, $\beta$ --- just because the exact phase $\phi$ is typically not available. But, in parallel, the two schemes \eqref{1ORDtilde}, \eqref{2ORDtilde} can also be considered as WKB-schemes \emph{without a numerically perturbed phase} for the perturbed Schr\"odinger equation \eqref{EQU}, i.e.\ with the perturbed coefficients $\tilde a$, $\tilde \beta$. The second point of view will be helpful in the subsequent numerical analysis.\\

In Section \ref{SEC3} we shall give a complete error analysis of the first order scheme (\ref{1ORDtilde}), (\ref{Transfo_ZU_tilde}) and of the second order scheme (\ref{2ORDtilde}), (\ref{Transfo_ZU_tilde}). This is a completion of the error estimates given in \cite{ABN11}, since that paper used the approximate phase $\tilde\phi$ only in the back-transformation (\ref{Transfo_ZU_tilde}) but not in the matrices $A_n^1$, $A_{n}^2$, and $A_n^3$.

%%%%%%%%%%%%%%%%%%%%%%%%%%%%%%%%%%%%%%%%%%%%%%%%%%%%%%%%%%%%%%%%%%%%%%%%%%%%%%%%%%%%%%%%
\Section{Error analysis including phase errors} \label{SEC3}

First we decompose the exact phase from \eqref{PH} as $\phi(x)=\phi_1(x)-\eps^2\phi_2(x)$ with
\be\label{phi-decomp}
  \phi_1(x):= \int_0^x \sqrt{a(\tau)} d\tau \,,\quad \phi_2(x):= \int_0^x \beta(\tau)  d\tau \,.
\ee
For the approximate phase $\tilde\phi=\tilde\phi_1-\eps^2\tilde\phi_2$ we shall make the following assumptions:\\

{\bf Hypothesis B} {\it Let $\tilde\phi_1, \, \tilde\phi_2\in C^\infty[0,1]$ % C^4[0,1]$ 
with $\tilde\phi_1'(x)\ge C_2>0$.}\\

This Hypothesis implies that there are positive constants $C_3,\,C_4,\,C_5$ such that
\begin{eqnarray*}
  0<C_3\le \tilde\phi'(x)=\tilde\phi_1'(x)-\eps^2\tilde\phi_2'(x)\le C_4
  \quad && \mbox{for any }\: 0<\eps\le\tilde\eps_0\,,\\
  \tilde\phi''(x)=\tilde\phi_1''(x)-\eps^2\tilde\phi_2''(x)\le C_5
  \quad && \mbox{for any }\: 0<\eps\le\tilde\eps_0\,,\\
\end{eqnarray*}
with some $0<\tilde\eps_0\le\eps_0$.
Hence, Hypothesis {\bf B} implies that there are positive numbers $E$, $E'$, $E''$, such that 
the following error bounds on $\tilde\phi$ hold uniformly in $\eps\in(0,\tilde\eps_0]$:
\begin{eqnarray}\label{phitilde-errors}
  \|\phi-\tilde\phi\|_{L^\infty(0,1)}&\le E\,,\nonumber\\
  \|\phi'-\tilde\phi'\|_{L^\infty(0,1)}\le 
  \|\phi_1'-\tilde\phi_1'\|_{L^\infty(0,1)}+\eps^2 \|\phi_2'-\tilde\phi_2'\|_{L^\infty(0,1)}&\le E'\,,\\
  \|\phi''-\tilde\phi''\|_{L^\infty(0,1)}\le 
  \|\phi_1''-\tilde\phi_1''\|_{L^\infty(0,1)}+\eps^2 \|\phi_2''-\tilde\phi_2''\|_{L^\infty(0,1)}&\le E''\,,\nonumber
\end{eqnarray}
These error bounds will be important ingredients for the subsequent error estimates.
In accordance with \eqref{phi-decomp} we also define $\tilde a(x):=(\phi_1'(x))^2$, $\tilde\beta(x):=\phi_2'(x)$, but we do \emph{not require} that $\tilde\beta =-\frac{1}{2\tilde a^{1/4}}(\tilde a^{-1/4})''$ holds (cp. with \eqref{beta-def}). We remark that this equality was also \emph{not} used in the error analysis of \cite{ABN11}.

Note that we assume here that $\tilde\phi$ is a continuous (and smooth) function on $[0,1]$, and it is not only defined on the grid points $x_n$. In particular, this is satisfied for the spectral approximation constructed in \S\ref{SEC4} below.\\

As a first step of the error analysis we shall estimate the error between the (continuous) solution $Z(x)$ to \eqref{EQZ} and its perturbed analog $\tilde Z(x)$, which is the exact solution to
\begin{equation} \label{EQZ1}
\left\{
\begin{array}{l}
\ds {d\tilde Z\over dx} = \eps \tilde B \tilde Z\,,\quad 0<x<1\,,\\[3mm]
\ds \tilde Z(0)=Z_I=P\,U_I\,,
\end{array}
\right.
\end{equation}
with the matrix 
$$
\tilde B(x): = 
\tilde \beta(x) \left(
\begin{array}{cc}
0&e^{-\frac{2i}{ \eps} \tilde \phi(x)}\\
e^{\frac{2i}{ \eps} \tilde \phi(x)}&0
\end{array}
\right)\,.
$$

\begin{lemma}\label{Z-diff}
Let the coefficient function $a$ satisfy Hypothesis {\bf A} and let $\tilde\phi$ satisfy Hypothesis {\bf B}. Then we have
\be\label{Z-error}
  \|Z-\tilde Z\|_{L^\infty(0,1)}\le c\eps [\min(\eps,E)+\eps(E'+E'')]\,,
\ee
with some generic constant $c$ independent of $\eps\in(0,\tilde \eps_0]$.
\end{lemma}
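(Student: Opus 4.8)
The plan is to estimate the difference $W(x):=Z(x)-\tilde Z(x)$ by a Gronwall-type argument applied to the difference of the two linear ODE systems \eqref{EQZ} and \eqref{EQZ1}. Writing $W' = \eps B Z - \eps \tilde B\tilde Z = \eps \tilde B W + \eps(B-\tilde B)Z$ with $W(0)=0$, Duhamel's formula gives $W(x)=\eps\int_0^x \tilde B(s)W(s)\,ds + \eps\int_0^x (B-\tilde B)(s)Z(s)\,ds$. Since $\tilde B$ is bounded uniformly in $\eps$ (by Hypothesis \textbf{B}, $\tilde\beta=\tilde\phi_2'$ is bounded on $[0,1]$) and $\|Z\|_{L^\infty(0,1)}$ is bounded uniformly in $\eps$ (the solution of \eqref{EQZ} with $B$ bounded, $\eps\le\tilde\eps_0$), a Gronwall inequality reduces the whole estimate to bounding the \emph{inhomogeneity} $\eps\int_0^x (B-\tilde B)(s)Z(s)\,ds$ in $L^\infty$.

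The core of the argument is therefore the claim that
\[
  \Big\| \int_0^x \big(B-\tilde B\big)(s)\,Z(s)\,ds \Big\| \le c\,[\min(\eps,E)+\eps(E'+E'')]\,,\qquad x\in[0,1]\,.
\]
I would split $B-\tilde B$ into the part coming from the amplitude factors $\beta-\tilde\beta$ and the part coming from the oscillatory exponentials $e^{\pm 2i\phi/\eps}-e^{\pm 2i\tilde\phi/\eps}$. The first part is harmless: $\|\beta-\tilde\beta\|_{L^\infty}=\|\phi_2'-\tilde\phi_2'\|_{L^\infty}\le E'/\eps^2$ is \emph{not} small, so one cannot just take absolute values — instead one must integrate by parts in $s$ to exploit the oscillation $e^{\pm 2i\phi/\eps}$, which gains a factor $\eps$; the boundary terms and the derivative terms then carry at worst one factor $E'$ or $E''$ together with the prefactor $\eps$, matching the $\eps(E'+E'')$ contribution. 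For the exponential differences one uses the elementary bound $|e^{i\alpha}-e^{i\gamma}|\le\min(2,|\alpha-\gamma|)$ with $\alpha-\gamma = \frac{2}{\eps}(\phi-\tilde\phi)$, so $|e^{2i\phi/\eps}-e^{2i\tilde\phi/\eps}|\le\min(2,\frac{2}{\eps}E)$; multiplying by the bounded factor $\beta$ and integrating over $[0,1]$ gives the $\min(\eps,E)$ term after pulling out one power of $\eps$ from the prefactor $\eps$ in front of the integral — more precisely, $\eps\cdot\min(2,\frac2\eps E)=\min(2\eps,2E)=2\min(\eps,E)$. Collecting the pieces yields the stated bound $\|Z-\tilde Z\|_{L^\infty}\le c\eps[\min(\eps,E)+\eps(E'+E'')]$.

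I expect the main obstacle to be the careful integration-by-parts treatment of the amplitude-difference term $\int_0^x(\beta-\tilde\beta)(s)e^{\pm 2i\phi(s)/\eps}Z(s)\,ds$: one must write $e^{\pm 2i\phi/\eps}=\frac{\eps}{\pm 2i\phi'}\frac{d}{ds}e^{\pm 2i\phi/\eps}$ (legitimate since $\phi'\ge C_0>0$ by Hypothesis \textbf{A}), integrate by parts, and then control the new integrand, which contains $\frac{d}{ds}\big[\frac{(\beta-\tilde\beta)Z}{\phi'}\big]$; this produces $\beta'-\tilde\beta'=\phi_2''-\tilde\phi_2''$ (bounded by $E''/\eps^2$) and $Z'=\eps BZ$ (an extra $\eps$), so the powers of $\eps$ and the phase-error constants must be tracked precisely to land on $\eps(E'+E'')$ rather than something larger. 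A small additional point is to double-check that $\tilde\beta'$ is genuinely controlled by $E''/\eps^2$ via the decomposition $\phi''-\tilde\phi'' = (\phi_1''-\tilde\phi_1'')-\eps^2(\phi_2''-\tilde\phi_2'')$ in \eqref{phitilde-errors}, and that the Gronwall constant stays $\eps$-uniform, which it does since the homogeneous coefficient $\eps\tilde B$ has $L^1(0,1)$-norm bounded by $c\eps\le c\tilde\eps_0$.
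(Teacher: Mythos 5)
Your skeleton (Duhamel formula for the difference, an $\eps$-uniform Gronwall step, then estimating the inhomogeneity by exploiting the oscillations) is the same as the paper's, but as written the powers of $\eps$ do not add up to \eqref{Z-error}. The first problem is the oscillatory part: for $\int_0^x\beta\,\big(e^{\pm 2i\phi/\eps}-e^{\pm 2i\tilde\phi/\eps}\big)\,z\,ds$ you use only the pointwise bound $\min(2,2E/\eps)$ and then absorb the single prefactor $\eps$, which gives a contribution of order $\min(\eps,E)$ to $\|Z-\tilde Z\|$; the lemma claims $\eps\min(\eps,E)$, one factor $\eps$ better, and no pointwise bound can recover it. This extra $\eps$ is exactly what the paper gains by integrating by parts on the \emph{whole} difference $\beta e^{\pm2i\phi/\eps}-\tilde\beta e^{\pm2i\tilde\phi/\eps}$, i.e.\ writing $\beta e^{2i\phi/\eps}=-i\eps\,\beta_0\,(e^{2i\phi/\eps})'$ (and the same for the tilde quantities) before integrating by parts against the propagator, cf.\ \eqref{DZ-repr}: afterwards the exponential difference only occurs in terms with prefactor $\eps^2$ (boundary terms, differentiated integrand) or $\eps^3$ (from $\partial_y S(x,y)=-\eps S(x,y)B(y)$), which yields $\eps^2\min(1,E/\eps)=\eps\min(\eps,E)$. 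So you must integrate by parts in the exponential-difference part as well, not only in the amplitude part.

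The second problem is the amplitude part. You work with $\|\beta-\tilde\beta\|_\infty\le E'/\eps^2$ (and, implicitly, $\|\beta'-\tilde\beta'\|_\infty\le E''/\eps^2$), which is the literal consequence of \eqref{phitilde-errors}; a single integration by parts then gains only one power of $\eps$, so this part of the core integral is of size $(E'+E'')/\eps$ and its contribution to $\|Z-\tilde Z\|$ of size $E'+E''$ --- a factor $\eps^2$ worse than the claimed $\eps^2(E'+E'')$, so the stated conclusion does not follow from your steps. The paper's proof uses the constants differently: in \eqref{est1} the quantity $\beta-\tilde\beta=\phi_2'-\tilde\phi_2'$ is bounded by $E'$ (and likewise $\beta'-\tilde\beta'$ by $E''$) \emph{without} any $\eps^{-2}$ weight, i.e.\ $E'$, $E''$ are understood to dominate the $\phi_2$-errors individually (possible since $\tilde\phi_1,\tilde\phi_2$ are fixed, $\eps$-independent functions). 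You flag this as something to ``double-check'' but leave it open; with your reading the bookkeeping genuinely fails, so you need to adopt bounds of the form $\|\phi_2'-\tilde\phi_2'\|_\infty\le E'$, $\|\phi_2''-\tilde\phi_2''\|_\infty\le E''$ before the amplitude terms land on $\eps(E'+E'')$. The Gronwall/propagator part of your argument is fine and matches Step 1 of the paper.
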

\begin{proof}
\underline{Step 1 (bound on the solution propagator:)}
We define the \emph{propagator} pertaining to the ODE in \eqref{EQZ} as the matrix $S(x,y)\in\CC^{2\times 2}$ that satisfies $Z(x)=S(x,y)\,Z(y)$, and $\tilde S(x,y)$ is the propagator for \eqref{EQZ1}.
To estimate the growth of the solution $Z(x)\in\CC^2$ we compute
\begin{eqnarray*}
  \frac{d}{d x} \|Z\|^2 &=& 2\eps\beta(x) \bar Z^T 
  \left(
\begin{array}{cc}
0&e^{-\frac{2i}{ \eps} \phi(x)}\\
e^{\frac{2i}{ \eps} \phi(x)}&0
\end{array}
\right) Z 
= 2\eps\beta(x) \Big(e^{\frac{2i}{ \eps} \phi(x)} z_1\bar z_2 + e^{-\frac{2i}{ \eps} \phi(x)}\bar z_1z_2 \Big) \\
&\le& 2\eps\|\beta\|_\infty \|Z\|^2\,.
\end{eqnarray*}
Hence $\frac{d}{d x} \|Z\|\le \eps\|\beta\|_\infty \|Z\|$, where we used the abbreviation $\|.\|_\infty$ for $\|.\|_{L^\infty(0,1)}$. Gronwall's lemma then implies for the solution propagator of \eqref{EQZ}:
\be\label{S-est}
  \|S(x,y)\| \le e^{\eps\|\beta\|_\infty |x-y|}\,,\qquad 0\le x,\,y\le1\,,
\ee
and analogously for the solution propagator of \eqref{EQZ1}:
\be\label{tildeS-est}
  \|\tilde S(x,y)\| \le e^{\eps\|\tilde \beta\|_\infty |x-y|}\,\qquad 0\le x,\,y\le1\,.
\ee

\noindent
\underline{Step 2 (bound on $\Delta Z$:)}
We denote $\Delta Z:=Z-\tilde Z$, which satisfies
$$
  \Delta Z' = \eps B(x)\Delta Z +\eps(B(x)-\tilde B(x))\tilde Z(x)\,,\quad \Delta Z_I=0\,.
$$
The solution of this inhomogeneous equation reads
\begin{eqnarray}\label{DZ-repr}
\Delta Z &=& \eps\int_0^x S(x,y)\,[(B(y)-\tilde B(y))\tilde Z(y)]\,dy \nonumber\\
&=& i\eps^2 \int_0^x S(x,y)\, 
\left(
\begin{array}{c}
\big[\beta_0 \big(e^{-\frac{2i}{ \eps} \phi}\big)' -\tilde\beta_0 \big(e^{-\frac{2i}{ \eps} \tilde\phi}\big)'\big]\,\tilde z_2 \\
\big[-\beta_0 \big(e^{\frac{2i}{ \eps} \phi}\big)' +\tilde\beta_0 \big(e^{\frac{2i}{ \eps} \tilde\phi}\big)'\big]\,\tilde z_1
\end{array} 
\right) \,dy\nonumber \\
&=& i\eps^2 S(x,y)\, 
\left(
\begin{array}{c}
\big[\beta_0 e^{-\frac{2i}{ \eps} \phi} -\tilde\beta_0 e^{-\frac{2i}{ \eps} \tilde\phi}\big]\,\tilde z_2 \\
\big[-\beta_0 e^{\frac{2i}{ \eps} \phi} +\tilde\beta_0 e^{\frac{2i}{ \eps} \tilde\phi}\big]\,\tilde z_1
\end{array} 
\right) \Bigg|_{y=0}^{y=x} \\
&-&i\eps^2 \int_0^x S(x,y)\, 
\left(
\begin{array}{c}
(\beta_0\tilde z_2)' e^{-\frac{2i}{ \eps} \phi} -(\tilde\beta_0\tilde z_2)' e^{-\frac{2i}{ \eps} \tilde\phi} \\
-(\beta_0\tilde z_1)' e^{\frac{2i}{ \eps} \phi} +(\tilde\beta_0\tilde z_1)' e^{\frac{2i}{ \eps} \tilde\phi} 
\end{array} 
\right) \,dy \nonumber \\
&+&i\eps^3 \int_0^x S(x,y)\, \Bigg[B(y)
\left(
\begin{array}{c}
\big[\beta_0 e^{-\frac{2i}{ \eps} \phi} -\tilde\beta_0 e^{-\frac{2i}{ \eps} \tilde\phi}\big]\,\tilde z_2 \\
\big[-\beta_0 e^{\frac{2i}{ \eps} \phi} +\tilde\beta_0 e^{\frac{2i}{ \eps} \tilde\phi}\big]\,\tilde z_1
\end{array} 
\right)\Bigg] \,dy\,,\nonumber
\end{eqnarray}
where we skipped in the integrands the argument "$(y)$" for brevity.
In \eqref{DZ-repr} we used the following integration by parts formula involving the propagator $T(x,y)$ for some linear evolution equation $u'=A(x)u$:
\begin{eqnarray*}
  && \int_0^x T(x,y)\,[f'(y)g(y)]\,dy = T(x,y)\,[f(y)g(y)]\big|_{y=0}^{y=x} \\
  && - \int_0^x T(x,y)\,[f(y)g'(y)]\,dy + \int_0^x T(x,y)\,\big[A(y)\{f(y)g(y)\}\big]\,dy\,,
\end{eqnarray*}
which can be verified easily by using %an analog of \eqref{dSdy}. 
$\frac{\partial}{\partial y} T(x,y) = -T(x,y)A(y)$.
Note that the integration by parts in the oscillatory integral of \eqref{DZ-repr} will allow to recover one more $\eps$-power in the estimate of $\Delta Z$. This strategy was already used in Proposition 2.2 of \cite{ABN11}.
In the last line of \eqref{DZ-repr} we used also 
\be\label{dSdy}
  \frac{\partial}{\partial y}S(x,y)=-\eps S(x,y)B(y) \,.
\ee

On the r.h.s.\ of \eqref{DZ-repr} we have to consider two types of differences: First we estimate
\begin{eqnarray}\label{est1}
  && \Big|\beta_0(y)e^{\frac{2i}{\eps} \phi(y)} - \tilde\beta_0(y)e^{\frac{2i}{\eps} \tilde\phi(y)}\Big|\nonumber\\
  && =\Big|\beta_0(y)\big[ e^{\frac{2i}{\eps} \phi(y)}- e^{\frac{2i}{\eps} \tilde\phi(y)}\big]
  +\Big[-\frac{\beta(y)}{2}\frac{\phi'(y)-\tilde\phi'(y)}{\phi'(y)\tilde\phi'(y)}
  +\frac{\beta-\tilde\beta}{2\tilde\phi'}\Big] e^{\frac{2i}{\eps} \tilde\phi(y)} 
  \Big|\\
  &&\le 2\|\beta_0\|_\infty\min(1,\frac{E}{\eps}) +\|\beta\|_\infty\frac{E'}{2C_0C_3}+\frac{E'}{2C_3}\,,\nonumber
\end{eqnarray}
where we used for the first term in \eqref{est1} both the trivial estimate 
$|e^{\frac{2i}{\eps} \phi(y)} -e^{\frac{2i}{\eps} \tilde\phi(y)}|\le2$ and the mean value theorem for vector functions. We also used $\beta-\tilde\beta=\phi_2'-\tilde\phi_2'$, and we recall the definitions $\beta_0(y):=\beta(y)/(2\phi'(y))$, $\tilde\beta_0(y):=\tilde\beta(y)/(2\tilde\phi'(y))$.

Secondly we estimate:
\begin{eqnarray*}\label{est2}
  && \Big|\beta_0'(y) e^{\frac{ 2i}{\eps} \phi(y)} - \tilde\beta_0'(y) e^{\frac{ 2i}{\eps} \tilde\phi(y)}\Big|
  =\Big|\beta_0'\big[ e^{\frac{2i}{\eps} \phi}- e^{\frac{2i}{\eps} \tilde\phi}\big]- \nonumber\\
  && 
  -\Big[\frac{\beta'}{2}\frac{\phi'-\tilde\phi'}{\phi'\tilde\phi'}
  +\frac{\beta}{2} \big\{\frac{\phi''(\tilde\phi'-\phi')(\tilde\phi'+\phi')}{(\phi'\tilde\phi')^2} + \frac{\phi''-\tilde\phi''}{(\tilde\phi')^2}\big\}
  -\frac{\beta'-\tilde\beta'}{2\tilde\phi'} + \frac{(\beta-\tilde\beta)\tilde\phi''}{2(\tilde\phi')^2}
  \Big] e^{\frac{2i}{\eps} \tilde\phi}\Big|\\
  &&\le 2\|\beta_0'\|_\infty\min(1,\frac{E}{\eps}) +\|\beta'\|_\infty\frac{E'}{2C_0C_3}
  +\|\beta\|_\infty\frac{\|\phi''\|_\infty (C_1+C_4) E'}{2C_0^2C_3^2}
  +\|\beta\|_\infty\frac{E''}{2C_3^2}+\frac{E''}{2C_3}+\frac{C_5E'}{2C_3^2}\,.\nonumber
\end{eqnarray*}
%\begin{eqnarray*}\label{est2}
%  && \Big|\beta_0'(y) e^{\frac{ 2i}{\eps} \phi(y)} - \tilde\beta_0'(y) e^{\frac{ 2i}{\eps} \tilde\phi(y)}\Big|\nonumber\\
%  && =\Big|\beta_0'(y)\big[ e^{\frac{2i}{\eps} \phi(y)}- e^{\frac{2i}{\eps} \tilde\phi(y)}\big]
%  +\big[\frac{\beta'(y)}{2}\frac{\phi'(y)-\tilde\phi'(y)}{\phi'(y)\tilde\phi'(y)}
%  -\frac{\beta(y)}{2} \big\{\frac{\phi''(y)(\tilde\phi'(y)-\phi'(y))(\tilde\phi'(y)+\phi'(y))}{(\phi'(y)\tilde\phi'(y))^2} + \frac{\phi''(y)-\tilde\phi''(y)}{(\tilde\phi'(y))^2}\big\}
%  \big] e^{\frac{2i}{\eps} \tilde\phi(y)}\Big|\\
%  &&\le 2\|\beta_0'\|_\infty\min(1,\frac{E}{\eps}) +\|\beta'\|_\infty\frac{E'}{2C_0C_3}
%  +\|\beta\|_\infty\frac{\|\phi''\|_\infty (C_1+C_4) E'}{2C_0^2C_3^2}
%  +\|\beta\|_\infty\frac{E''}{2C_3^2}\,,\nonumber
%\end{eqnarray*}

{}From \eqref{S-est}, \eqref{tildeS-est}, and \eqref{EQZ1} we recall that the propagator $S(x,y)$, the solution $\tilde Z(x)$, and $\tilde Z'(x)$ are uniformly bounded in $x$, $y$, and $\eps$. Hence, the estimates \eqref{est1} and \eqref{est2} yield the result \eqref{Z-error} with a constant $c$ that depends only on $\|\beta\|_{\infty}$, $\|\beta_0\|_{W^{1,\infty}}$, $\|\phi''\|_\infty$, $C_0$, $C_1$, $C_3$, $C_4$, and $C_5$.
\end{proof}
\bigskip

This lemma allows to derive the main result of this section:
\begin{theorem}\label{ZU-est}
Let the coefficient function $a$ satisfy Hypothesis {\bf A} and let $\tilde\phi$ satisfy Hypothesis {\bf B}. Then the first order scheme \eqref{1ORDtilde}, \eqref{Transfo_ZU_tilde} and the second order scheme \eqref{2ORDtilde}, \eqref{Transfo_ZU_tilde} satisfy the following error estimates:
\be \label{error_Unew}
\quad\qquad ||U(x_n)-\tilde U_n||_{} \le C {E
\over \eps} +C \eps^2  \min(\eps,h)+C\eps [\min(\eps,E)+\eps(E'+E'')]\,,\; 1\le n\le N\,,
\ee
and
\be \label{error_U_2ORDnew}
\qquad ||U(x_n)-\tilde U_n||_{} \le C {E\over \eps} 
+C \eps^3 h^2+C\eps [\min(\eps,E)+\eps(E'+E'')]\,, \; 1\le n\le N\,,
\ee 
with some generic constant $C$ independent of $\eps\in(0,\tilde \eps_0]$, $n$, and $h$.
\end{theorem}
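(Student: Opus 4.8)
{\bf Proof plan.} The plan is to reduce the estimate to Lemma~\ref{Z-diff} and \cite[Th.\ 3.1]{ABN11} via a triangle inequality. First I would introduce the \emph{exact} solution $\tilde U(x):=P^{-1}e^{\frac{i}{\eps}\tilde\Phi(x)}\tilde Z(x)$, with $\tilde Z$ solving \eqref{EQZ1}. Undoing the three transformation steps of \S\ref{SEC21}, one sees that $\tilde U$ is precisely the exact solution of the analog of the IVP \eqref{EQU} in which $a$ is replaced by $\tilde a:=(\tilde\phi_1')^2$ and $\beta$ by $\tilde\beta:=\tilde\phi_2'$, with the same initial datum $\tilde U(0)=P^{-1}\tilde Z(0)=U_I$; here one uses $\sqrt{\tilde a}-\eps^2\tilde\beta=\tilde\phi'$, so that $\tilde\phi$ is genuinely the phase of this perturbed ODE. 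Then I split
\be
  \|U(x_n)-\tilde U_n\|\le\|U(x_n)-\tilde U(x_n)\|+\|\tilde U(x_n)-\tilde U_n\|\,.
\ee

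For the second term I would invoke the ``dual interpretation'' noted at the end of \S\ref{SEC22}: the matrices $\tilde A_n^1,\tilde A_n^2,\tilde A_n^3$ and the increment $\tilde S_n$ are built only from $\tilde\phi$, $\tilde\beta$ and the recursively defined $\tilde\beta_k$, which are exactly the phase, phase increment and auxiliary functions \eqref{def-betak} of the perturbed ODE; hence \eqref{1ORDtilde}, \eqref{2ORDtilde} together with \eqref{Transfo_ZU_tilde} \emph{are} the WKB-schemes \eqref{1ORD}, \eqref{2ORD}, \eqref{Transfo_ZU} of \cite{ABN11} applied to that perturbed ODE with its own \emph{exact} phase $\tilde\phi$. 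So I would check that Hypothesis~{\bf B} supplies everything the analysis of \cite{ABN11} needs for the perturbed system: $\tilde a\in C^\infty[0,1]$ with $\tilde a\ge C_2^2>0$, $\tilde\beta\in C^\infty[0,1]$, the uniform two-sided bound on $\tilde\phi'$ and the bound on $\tilde\phi''$ recorded after Hypothesis~{\bf B}, and (after possibly shrinking $\tilde\eps_0$) the perturbed $\eps$-smallness condition — recalling that \cite{ABN11} never uses the identity $\tilde\beta=-\frac{1}{2\tilde a^{1/4}}(\tilde a^{-1/4})''$. Since the phase is now exact, the $h^\gamma/\eps$ term in \eqref{error_U}, \eqref{error_U_2ORD} is absent, and \cite[Th.\ 3.1]{ABN11} yields $\|\tilde U(x_n)-\tilde U_n\|\le C\eps^2\min(\eps,h)$ for the first-order scheme and $\|\tilde U(x_n)-\tilde U_n\|\le C\eps^3h^2$ for the second-order scheme.

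For the first term, a purely continuous perturbation estimate, I would use \eqref{OI} to write
\be
  U(x)-\tilde U(x)=P^{-1}e^{\frac{i}{\eps}\Phi(x)}\big(Z(x)-\tilde Z(x)\big)+P^{-1}\big(e^{\frac{i}{\eps}\Phi(x)}-e^{\frac{i}{\eps}\tilde\Phi(x)}\big)\tilde Z(x)\,.
\ee
Since $\phi$ is real, $e^{\frac{i}{\eps}\Phi(x)}$ is unitary, so the first summand is bounded by $\|P^{-1}\|\,\|Z-\tilde Z\|_{L^\infty(0,1)}$, which is $\le c\eps[\min(\eps,E)+\eps(E'+E'')]$ by Lemma~\ref{Z-diff}. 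For the second summand I would bound each diagonal entry of $e^{\frac{i}{\eps}\Phi(x)}-e^{\frac{i}{\eps}\tilde\Phi(x)}$ by $|e^{\pm i(\phi(x)-\tilde\phi(x))/\eps}-1|\le\min\big(2,\,|\phi(x)-\tilde\phi(x)|/\eps\big)\le E/\eps$, and combine this with the $\eps$-uniform bound on $\tilde Z$ following from \eqref{tildeS-est} and \eqref{EQZ1}; this produces the leading term $CE/\eps$. Adding the three contributions gives exactly \eqref{error_Unew} and \eqref{error_U_2ORDnew}.

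I expect the only genuinely delicate point to be the bookkeeping in the second step: one has to make the ``perturbed ODE'' viewpoint watertight, i.e.\ verify that $\tilde a,\tilde\beta$ — which arise from the numerical phase rather than from a true potential — still satisfy every structural assumption used inside the proof of \cite[Th.\ 3.1]{ABN11}, and that the discrete matrices $\tilde A_n^i$ coincide term by term with that scheme for the perturbed ODE. Everything else is the triangle inequality together with Lemma~\ref{Z-diff} (already proved) and the elementary bound on the phase exponentials.
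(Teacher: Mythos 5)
Your proposal is correct and follows essentially the same route as the paper: Lemma~\ref{Z-diff} for the continuous perturbation, the ``dual interpretation'' of \eqref{1ORDtilde}, \eqref{2ORDtilde} as exact-phase WKB schemes for the perturbed system so that \cite[Th.\ 3.1]{ABN11} applies without the $h^\gamma/\eps$ term, and the bound $\min(2,E/\eps)$ on the phase factors combined with unitarity and the Gronwall bounds. The only difference is bookkeeping: you split at the $U$-level into $U-\tilde U$ plus $\tilde U(x_n)-\tilde U_n$, whereas the paper first estimates $\|Z(x_n)-\tilde Z_n\|$ and then back-transforms, picking up the $e^{i\Phi/\eps}-e^{i\tilde\Phi/\eps}$ term there — the resulting estimates are identical.
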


Let us compare this result with the estimates \eqref{error_U}, \eqref{error_U_2ORD} that are due to \cite[Th. 3.1]{ABN11}: The first error terms on the r.h.s.\ of  \eqref{error_Unew} and \eqref{error_U_2ORDnew} are generalizations to $h$-independent numerical integrations of the phase integral. The new (additional) third terms are due to using the perturbed phase $\tilde\phi$ in the WKB-method.\\

\begin{proof}[of Theorem \ref{ZU-est}]
First we estimate the error in the $Z$-variable, for $1\le n\le N$:
\begin{eqnarray}\label{Z-error1}
  ||Z(x_n)-\tilde Z_n|| &\le& \|Z-\tilde Z\|_\infty + \|\tilde Z(x_n)-\tilde Z_n\| \nonumber \\
  &\le& C\eps [\min(\eps,E)+\eps(E'+E'')] + 
  \left\{ \begin{array}{ll} C \eps^2  \min(\eps,h) & \mbox{for first order,} \\
  C \eps^3 h^2  & \mbox{for second order,} \end{array} \right.
\end{eqnarray}
where we used Lemma \ref{Z-diff} and the WKB-error estimate for $\tilde Z$ that is analogous to \eqref{error_U}, \eqref{error_U_2ORD} (skipping the first term, cf.\ \cite[Th. 3.1]{ABN11}).

Next we estimate the error in the $U$-variable, using \eqref{Transfo_ZU}, \eqref{Transfo_ZU_tilde}:
\begin{eqnarray*}
   ||U(x_n)-\tilde U_n|| &\le& \|P^{-1}\big( e^{{i \over \eps}\Phi(x_n)}- e^{{i \over \eps} \tilde\Phi(x_n)} \big)Z(x_n)\| + \|P^{-1} e^{{i \over \eps}\tilde\Phi(x_n)}\big(Z(x_n)-\tilde Z_n\big)\| \\
  &\le& C \min(1,{E\over \eps}) + ||Z(x_n)-\tilde Z_n||\,,
\end{eqnarray*}
where we used an estimate like for the first term of \eqref{est1}, the $\eps$-uniform boundedness of $Z(x)$ (see \eqref{S-est}), and the unitarity of the matrices $P^{-1}$, $e^{{i \over \eps}\tilde\Phi(x_n)}$. Combining the two estimates yields the result.
\end{proof}

%%%%%%%%%%%%%%%%%%%%%%
\Section{Spectral integration of the phase}
\label{SEC4}
%%%%%%%%%%%%%%%%%%%%%%
The estimation of the numerical errors (\ref{error_U}) and 
(\ref{error_U_2ORD}) in the computation of a 
solution to the Schr\"odinger 
equation in the semi-classical limit via the approach of \cite{ABN11} 
indicates that the problematic term for small $\eps$ is the first on 
the right hand sides of these expressions. It arises from the numerical 
computation of the phase (\ref{PH}) and is not present if the 
latter can be computed exactly. In cases where this is not possible, 
a high order method is recommended to reduce as much as possible 
the role of the term proportional to $1/\eps$. We use here spectral methods 
which are known to approximate analytic functions with spectral 
accuracy, i.e., an error 
decreasing exponentially with the number of modes. The numerical 
error for $C^{\infty}$ functions in accordance with Hypothesis 
\textbf{A} is known to decrease faster than any power of $h$, which 
means in practice an exponential decrease, too, see e.g.~\cite{Tr00}. 
Concretely we 
apply a Chebychev collocation method and use the Clenshaw-Curtis 
\cite{CC60}
algorithm for the integral in (\ref{PH}). For points in between 
collocation points of the Clenshaw-Curtis algorithm, we use barycentric interpolation, see \cite{BT04}.

The basic idea of spectral methods is to approximate a function $f$ on the 
interval $[a,b]$ via functions which are globally smooth on the 
considered interval. We will use here Chebychev polynomials since 
Chebychev series are related to Fourier series for which efficient 
numerical algorithms exist. Since any finite interval $x\in[a,b]$ can 
be mapped via $x = b(1+l)/2+a(1-l)/2$ to the interval $l\in[-1,1]$, we 
present all algorithms for the latter interval. We 
approximate $f(l)$ via
\begin{equation}
    f(l) \approx \sum_{n=0}^{N}a_{n}T_{n}(l),\quad l\in[-1,1]
    \label{approx},
\end{equation}
where the Chebychev polynomials $T_{n}(l)$ are defined as 
\begin{equation}
    T_{n}(l)= \cos[n\arccos(l)], \quad n = 0,1,\ldots
    \label{cheb}
\end{equation}

The idea of a 
collocation method is to introduce  collocation points $l_{j}$, 
$j=0,\ldots,N$  on $[-1,1]$ and to impose in (\ref{approx}) 
equality at the collocation points,
\begin{equation}
    f(l_{j}) = \sum_{n=0}^{N}a_{n}T_{n}(l_{j}),\quad j = 0,\ldots,N
    \label{coll}.
\end{equation}
These are $N+1$ equations to determine the spectral coefficients 
$a_{n}$, $n=0,\ldots,N$. Choosing the $l_{j}$ as the Chebychev 
collocation points $l_{j}=\cos(j\pi/N)$, $j=0,\ldots,N$ (note in 
particular that these points avoid the Runge phenomenon in 
interpolation on equidistant points and allow a uniform accuracy in 
the interpolation, see e.g., the discussion in Chap. 5 of 
\cite{Tr00}), the equations 
(\ref{coll}) take the form
\begin{equation}
    f(l_{j}) = \sum_{n=0}^{N}a_{n}\cos\left(\frac{nj\pi}{N}\right),\quad j = 0,\ldots,N
    \label{dct}.
\end{equation}
Thus the spectral coefficients are given by the discrete cosine 
transformation (DCT) of the function $f$ at the collocation points. 
Since the DCT is related to the discrete Fourier transform, it can be 
computed with the fast Fourier transform algorithm after some 
preprocessing, see for instance Chap. 8 of \cite{Tr00}. Thus one advantage of a
Chebychev collocation method is that a fast algorithm to compute the 
spectral coefficients exists. 

To integrate a function approximated by the Chebychev sum 
(\ref{approx}), a very efficient algorithm exists due to Clenshaw and 
Curtis \cite{CC60}. The basis of the algorithm is the well known 
identity for Chebychev polynomials (simply a consequence of the 
addition theorems for trigonometric functions)
\begin{equation}
      \frac{T_{n+1}'(l)}{n+1} - \frac{T_{n-1}'(l)}{n-1} = 
      2T_{n}(l),\quad n>1.
    \label{chebint}
\end{equation}
The antiderivative of a function $f$ approximated as a Chebychev sum 
(\ref{approx}) can itself be approximated by such a sum,
\begin{eqnarray}
    &&
    \int_{-1}^{l}f(l')dl' \approx \sum_{n=0}^{N}\int_{-1}^{l}a_{n}T_{n}\nonumber\\
    &&=
    a_{0}(l+1)+\sum_{n=1}^{N}a_{n}\left(
    \frac{T_{n+1}(l)-(-1)^{n}}{n+1} - \frac{T_{n-1}(l)-(-1)^{n}}{n-1}\right)
    \approx\sum_{n=0}^{N}b_{n}T_{n}(l)
    \label{bn},
\end{eqnarray}
where the $b_{n}$ follow from the $a_{n}$ via (\ref{chebint}),
\begin{eqnarray}
    b_{n}&=&\frac{1}{2(n-1)}(a_{n-1}-a_{n+1}),\quad n = 
    2,3,\ldots,N-1, \nonumber\\
    b_{1}&=&a_{0}-\frac{1}{2}a_{2}, \nonumber\\
    b_{N}&=&\frac{a_{N-1}}{2(N-1)}, \nonumber\\
    b_{0}&=&-\sum_{n=1}^{N}(-1)^{n}b_{n}
    \label{bn2}.
\end{eqnarray}
In 
\cite{Ch68} the numerical error of the Clenshaw-Curtis algorithm was 
discussed showing that it is a spectral method. 
Identity (\ref{chebint}) can obviously also be used to approximate derivatives 
of functions 
in coefficient space.  Alternatively and with the same 
numerical accuracy, one can use the differentiation matrices of Chap. 
6 of \cite{Tr00} following from Lagrangian interpolation on Chebychev 
collocation points. We use these matrices to 
compute the derivatives appearing in the definition of the 
$\beta_{k}$, $k=0,1,2,3$ (\ref{def-betak}). Thus these derivatives 
are also computed with spectral accuracy. 
For $L^\infty$--error bounds of the Chebychev spectral approximation (and its derivatives) we refer to Theorem 5 and 6 in \cite{Tr00}. We recall that such estimates were assumed for the error analysis in \S\ref{SEC3}.

As an 
example we consider the function $f(x)=\exp(-x^{2}/2)$ appearing in 
the examples in the following section for 
$x\in[0,1]$. The difference between the Clenshaw-Curtis 
approximation of $\int_{0}^{1}f(x)dx$ and the exact value 
$\sqrt{\pi/2}\,\mbox{erf}(1/\sqrt{2})$ (the well known \emph{error function} computed in 
Matlab to machine precision via $\mbox{erf}(x)$) in 
dependence of the number $N$ of collocation points is shown on the 
left of Fig.~\ref{ccfig}. It can be seen in the semilogarithmic plot 
that the numerical error decreases exponentially with the number $N$ 
of collocation points up to $N=14$ where the numerical error 
reaches the saturation level (we work here in double precision, thus the accuracy is 
limited in practice to the order of $10^{-16}$ because of rounding 
errors). 
\begin{figure}[htbp]
\begin{center}
\includegraphics[width=0.49\textwidth]{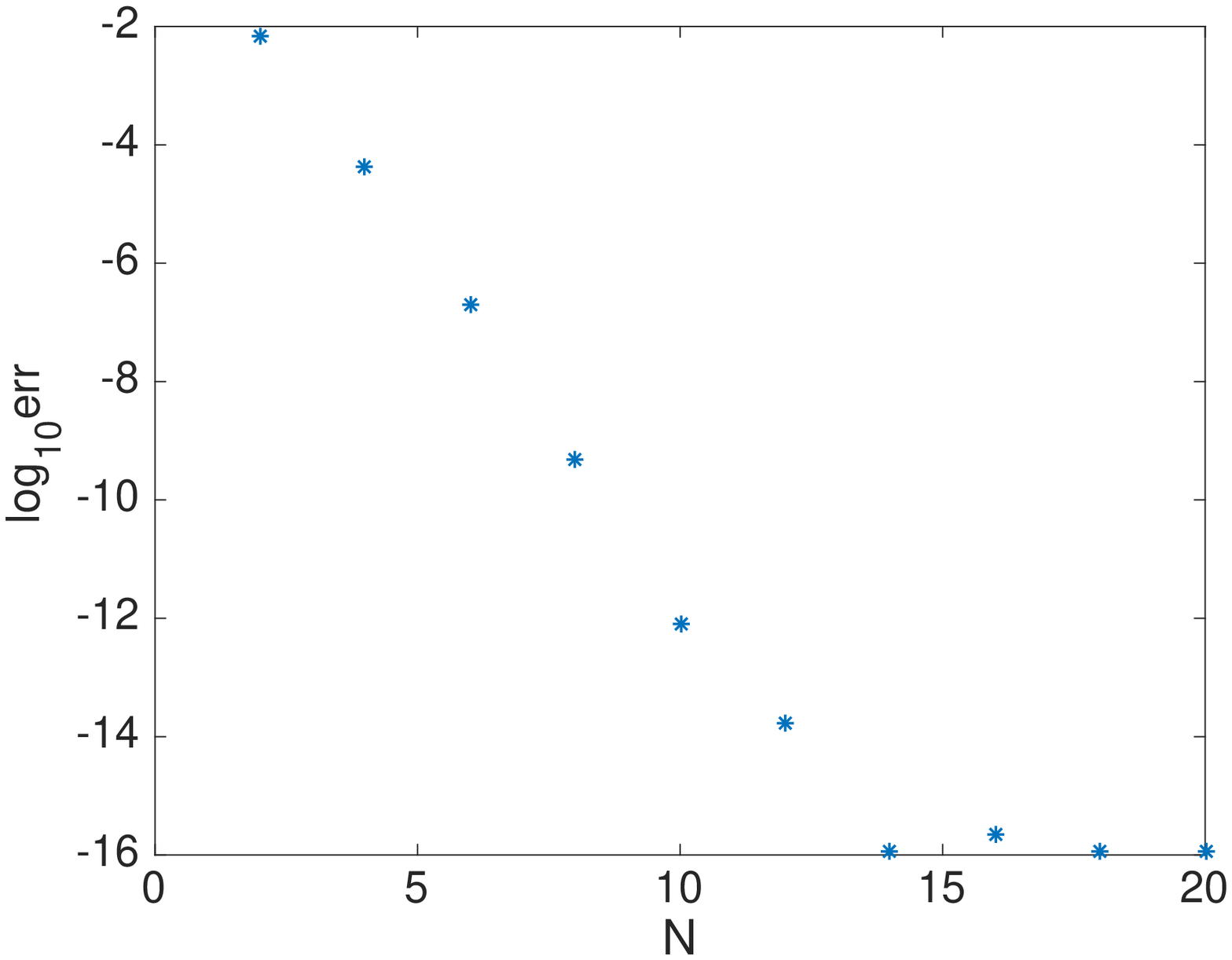}
\includegraphics[width=0.49\textwidth]{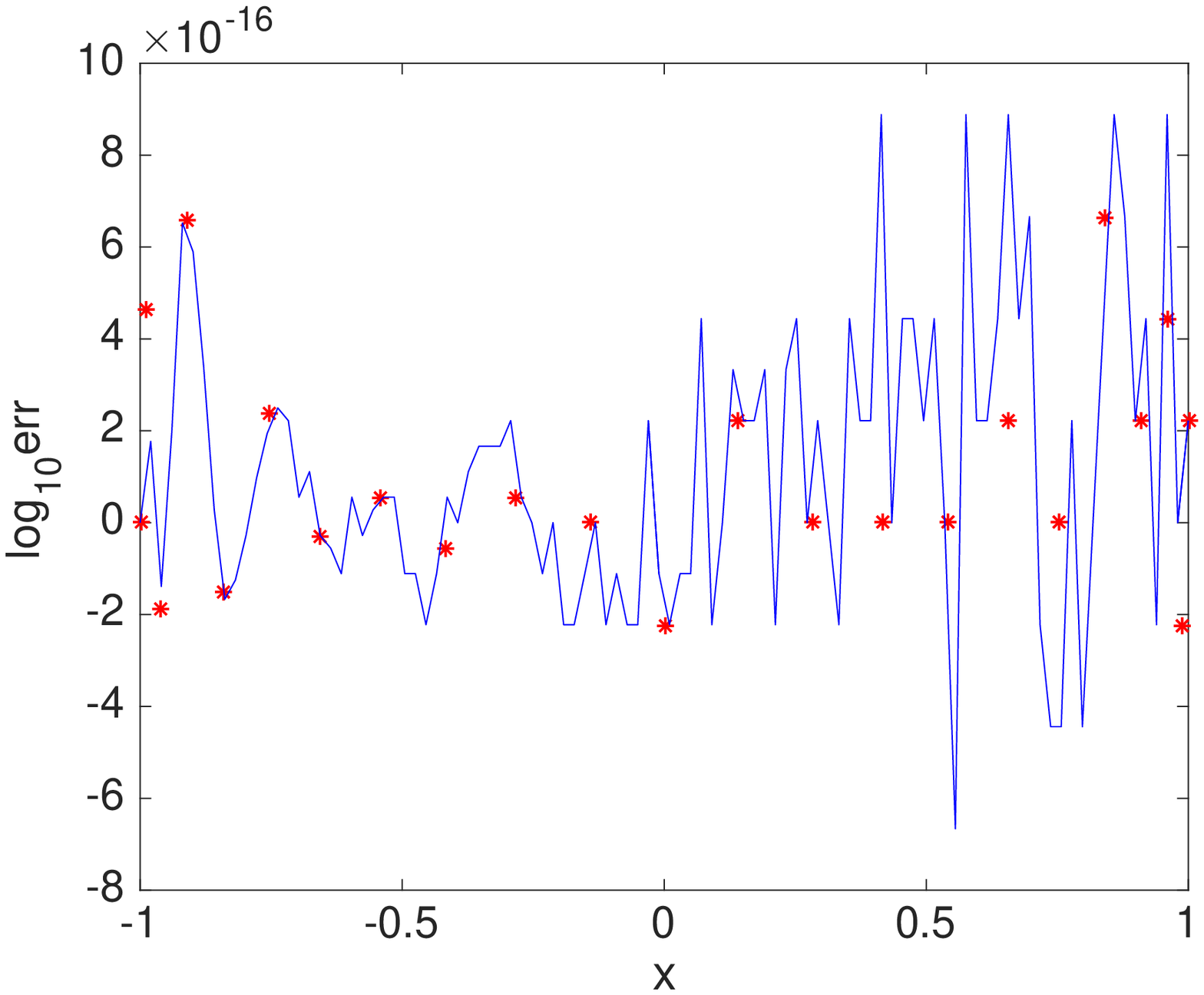}
\end{center}
\vspace{-0.3cm}
\caption{\label{ccfig} Antiderivative of $f=\exp(-x^{2}/2)$ on the 
interval $[0,1]$ 
as approximated by the Clenshaw-Curtis algorithm; 
on the left the difference between the approximation of 
$\int_{0}^{1}f(x)dx$ and $\sqrt{\pi/2}\,\mathrm{erf}(1/\sqrt{2})$ in dependence of $N$; on the right the difference 
between the antiderivative and $\sqrt{\pi/2}\,\mathrm{erf}(x/\sqrt{2})$ for $N=20$ at 
the collocation points (marked with $*$ in red) and for intermediate values 
after barycentric interpolation.}
\end{figure}

The Clenshaw-Curtis algorithm gives in principle only the 
antiderivative of a function at the collocation points. However in the 
present context, the former will be needed on more general values 
of $l$. Since the basis of the approach is a sum of Chebychev 
polynomials, intermediate values can be obtained in principle from formula 
(\ref{approx}). A numerically stable and very efficient way to 
interpolate is to use 
Lagrange interpolation in the barycentric form, see \cite{BT04} and 
references therein. For Chebychev collocation points, the 
interpolation weights can be given explicitly, and a Matlab code for 
this case can be found in \cite{BT04}. The difference between the 
antiderivative of the function $f(x)=\exp(-x^{2}/2)$ and $\sqrt{\pi/2}\,\mathrm{erf}(x/\sqrt{2})$ in 
dependence of $x$ can be seen for $N=20$ in Fig.~\ref{ccfig} on the 
right. The difference on the collocation points is marked with red 
`$*$s'. 
It can be seen that the error introduced by interpolation at 
intermediate points is also smaller than $10^{-15}$. 

As mentioned above, the exponential decay of the numerical error 
with $N$ in the Clenshaw-Curtis algorithm for functions analytic in a 
strip around the real axis in the complex plane is a 
general feature of Chebychev series for such functions. This can be 
seen on the left of Fig.~\ref{ccfigN} where the Chebychev 
coefficients of the antiderivative of $\exp(-x^{2}/2)$ are shown. 
They decrease exponentially, and the numerical 
error due to truncation of the series at $N+1$ terms is actually due 
to the highest order spectral coefficient. Thus the Chebychev 
coefficients also provide an approach to estimate the numerical error 
due to a spectral method by studying the spectral coefficients. For 
the example $a(x)=\exp(-x^{2})$, we just considered in 
Fig.~\ref{ccfig} the term $\int_{0}^{x}\sqrt{a(\tau)}\,d\tau$ in the integral of 
(\ref{2orderWKB}), since we had an independent way to compute the 
exact integral via the error function. This is not the case for the 
terms proportional to $\varepsilon^{2}$ in the integral in 
(\ref{2orderWKB}). But the spectral coefficients of the 
antiderivative of these terms ($(1+x^{2}/2)\exp(x^{2}/2)/4$ for the 
example $a(x)=\exp(-x^{2})$) on the right of Fig.~\ref{ccfigN} 
indicate a similar behavior of the error as in Fig.~\ref{ccfig}: For 
$N\sim20$ the Chebychev coefficients are of the order of the rounding 
error, and further increase of the number of coefficients no longer 
leads to higher accuracy. 
\begin{figure}[htbp]
\begin{center}
\includegraphics[width=0.49\textwidth]{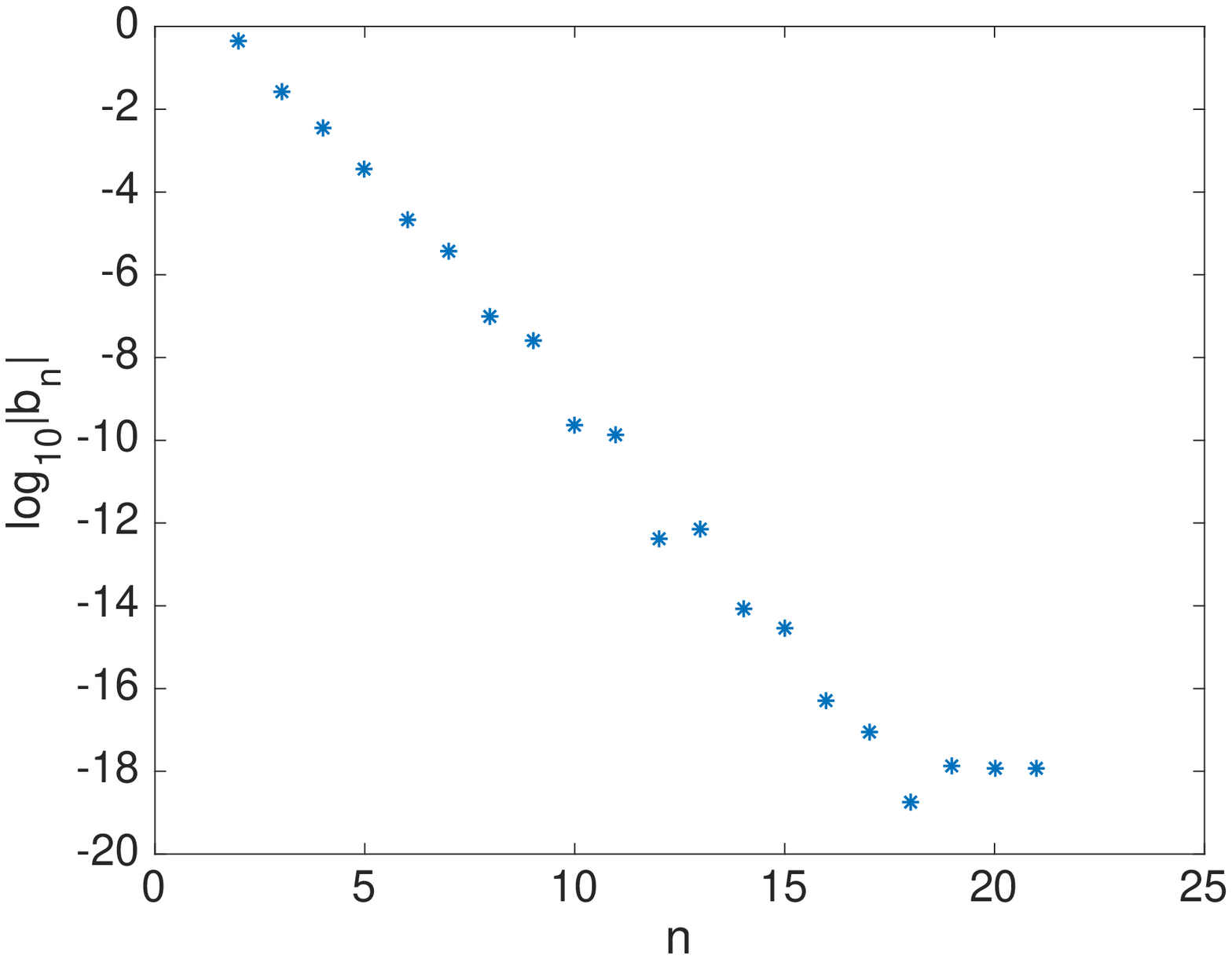}
\includegraphics[width=0.49\textwidth]{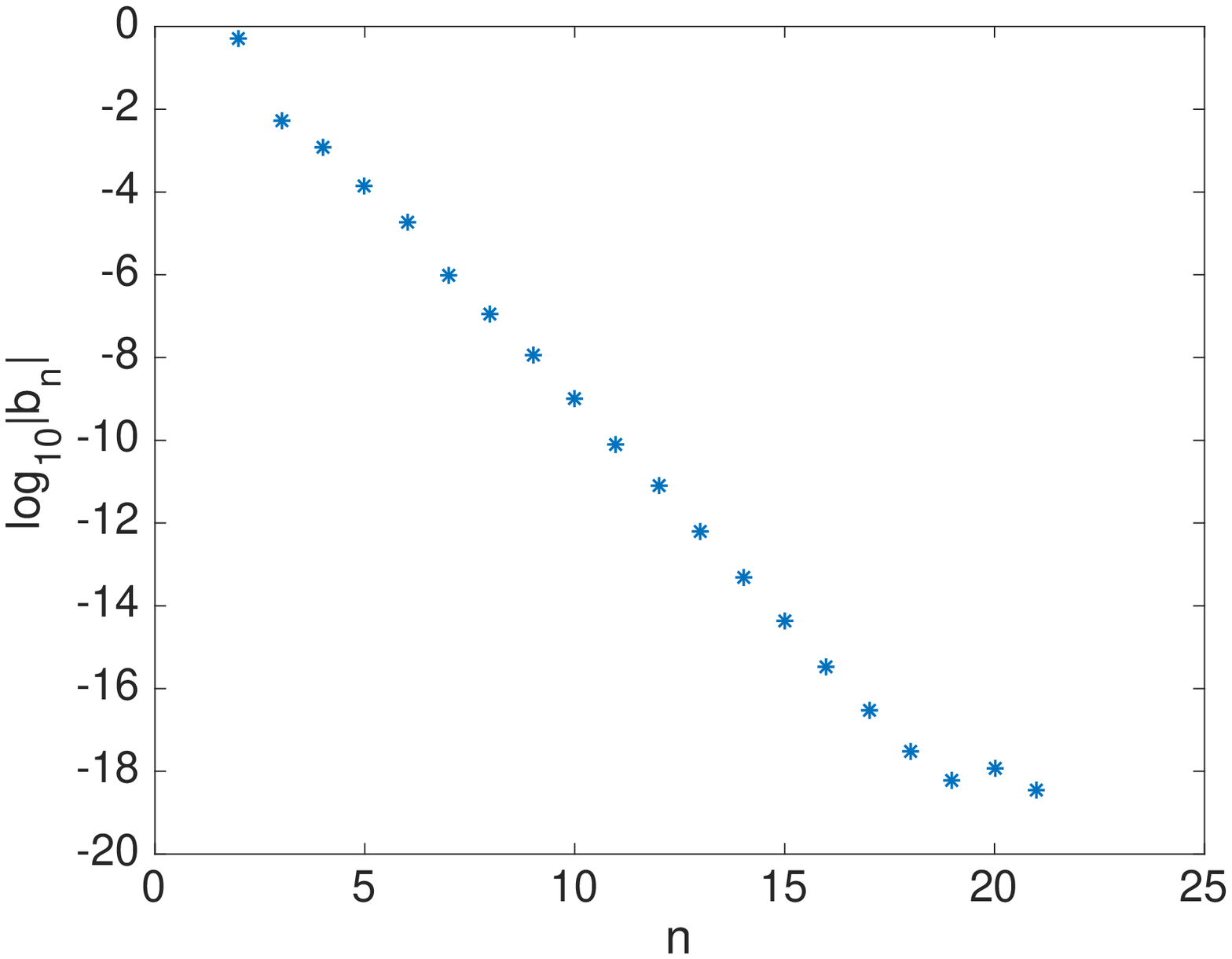}
\end{center}
\vspace{-0.3cm}
\caption{\label{ccfigN} Spectral coefficients for two antiderivatives 
in dependence of the number of collocation points: on the left for 
the function $\exp(-x^{2}/2)$, on the right for 
$(1+x^{2}/2)\exp(x^{2}/2)/4$.}
\end{figure}

\noindent\textbf{Remark:}\\
    The example in Fig.~\ref{ccfig} shows that a spectral approach for 
    $C^{\infty}$ functions allows in practice to reach machine 
    precision with low resolution, here with just 14 Chebychev 
    polynomials. Thus the numerical error reaches a plateau which 
    itself will increase with $N$. The latter is due to the fact that 
    rounding errors pile up with larger values of $N$. With finite 
    difference methods, considerably larger values of $N$ (or 
    equivalently smaller 
    values of $h$) are needed to reach the saturation of the numerical 
    errors, and because of this, the plateau is reached in practice 
    at much higher values than here,  of the order of $10^{-10}$ (see for instance examples 
    in \cite{Tr00}). 

Note that in \cite{ABN11}, the saturation level of the 
    numerical errors was not reached since quadruple precision 
    was used, and since the values of $h$ were not small enough to 
    get there with the used precision. Here, we work in double precision 
    and will reach the saturation level in most cases.

%%%%%%%%%%%%%%%%%%%%%%
\Section{Numerical results} \label{SEC5}
%%%%%%%%%%%%%%%%%%%%%%

We shall present now numerical results obtained with the first and second order
WKB-schemes from Section \ref{SEC2}. For our numerical tests we chose
$a(x)=\exp(-x^2)$ on the spatial interval $[0,1]$ with a uniform 
grid, and the initial condition $U_I=(1,\,-i)^\top$. For both schemes 
we shall compare the results obtained with two versions of the 
numerical phase computation: on the one hand by the composite Simpson 
rule (with error order $\gamma=4$) on the WKB-grid $\{x_n\}$, and on 
the other hand by the spectral method from \S\ref{SEC4} along with 
barycentric interpolation at the WKB-grid points $x_n$. Note that we 
use a Chebychev grid with $N=20$ points as in the previous section 
for the computation of the phase $\phi$ in (\ref{2orderWKB}) and 
then interpolate to the equidistant grid for the WKB-scheme. This is 
necessary since Chebychev collocation points are not equidistant. A 
consequence of this approach is that the spectral method computes the 
phase always to machine precision. In fact, the absolute and relative errors of $\tilde\phi_j^{(k)};\,j=1,2;\,k=0,...,4$ are always of the order $10^{-16}-4\cdot10^{-15}$ for this example.

\begin{figure}[htbp]
\begin{center}
\hspace{-13mm}
\includegraphics[width=0.54\textwidth]{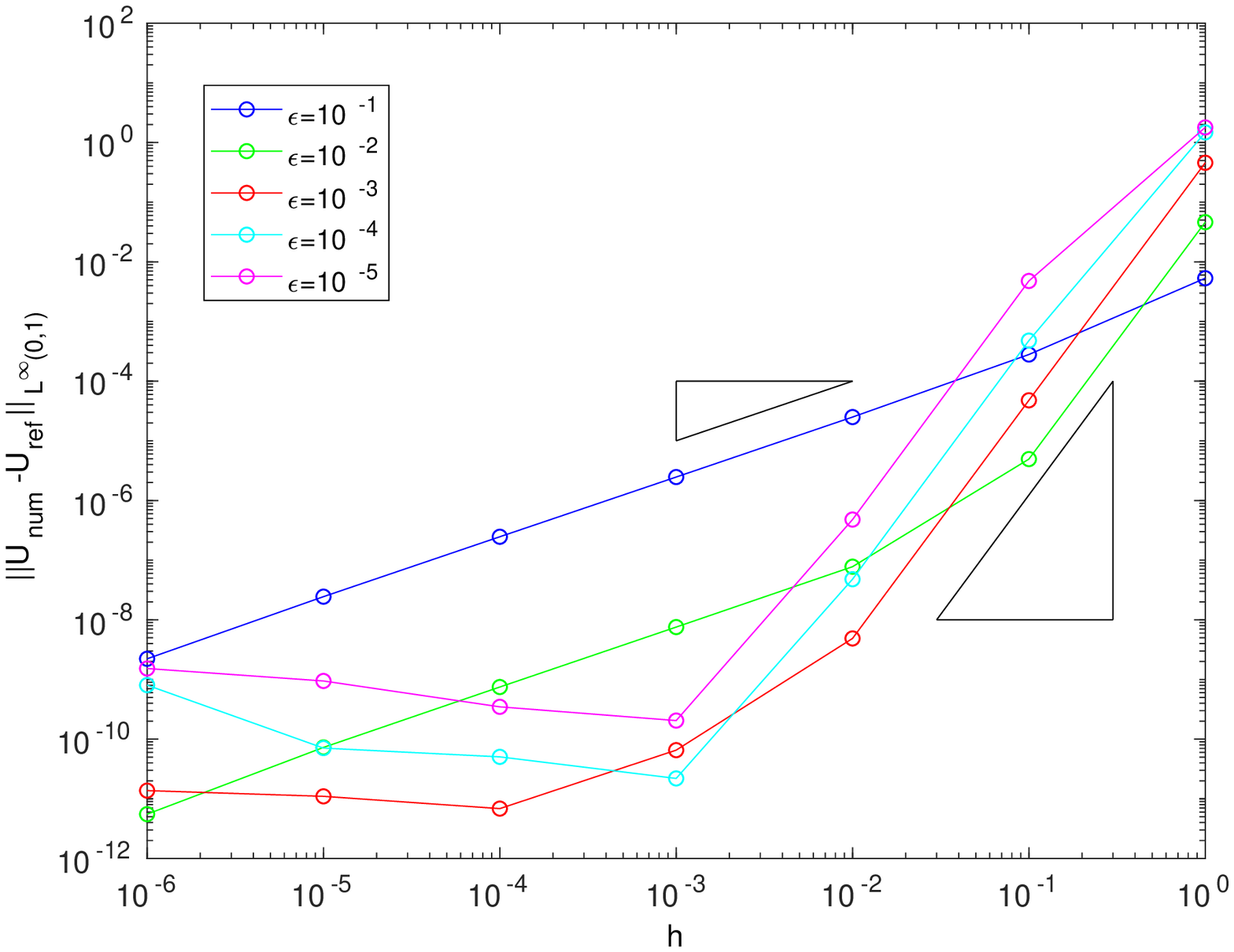}
\includegraphics[width=0.54\textwidth]{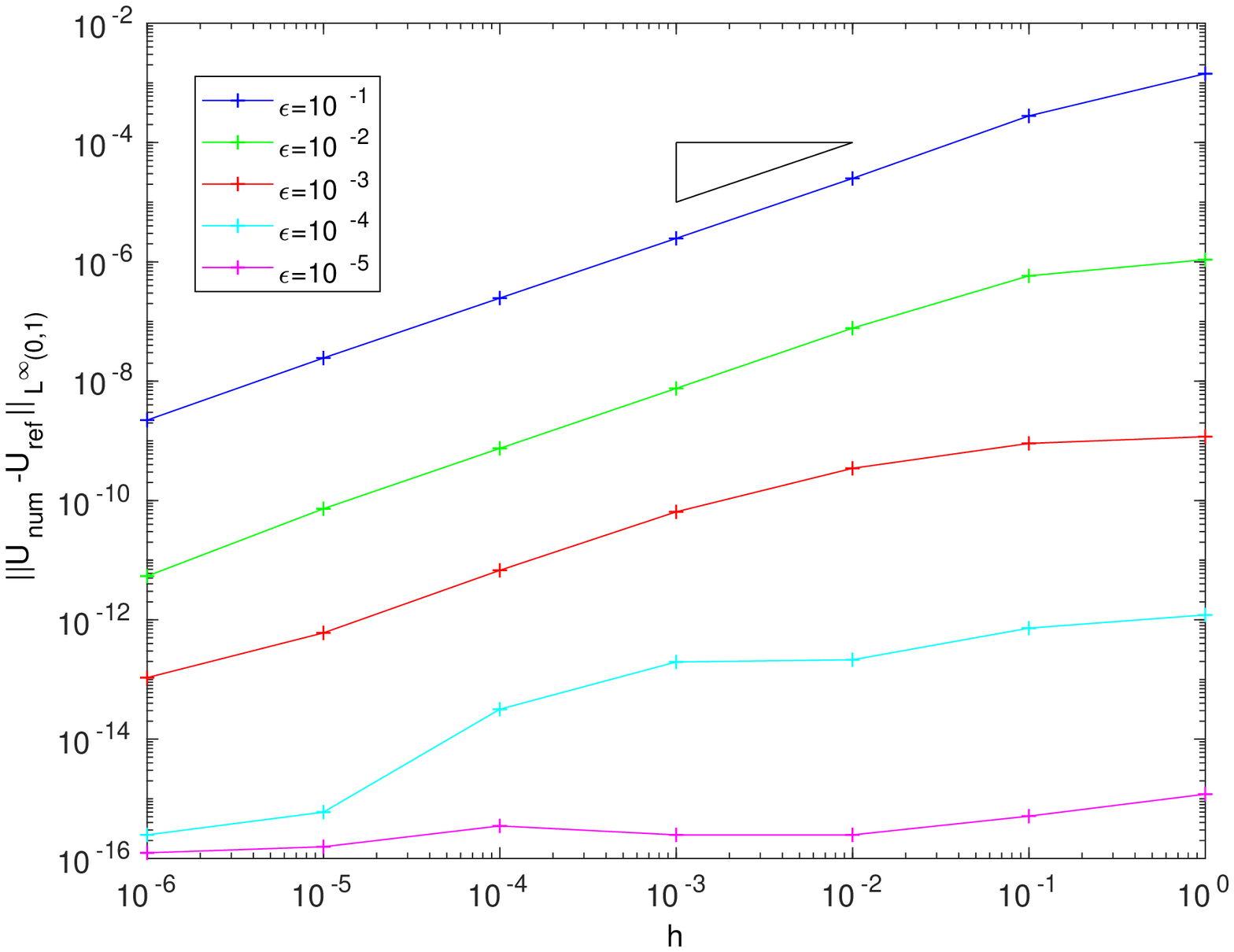}
\end{center}
\vspace{-0.3cm}
\caption{\label{1ORDfig} Error of the first order WKB-method on the interval $[0,1]$ as a function of $h$, for 5 values of $\eps$; on the left the results with the phase $\tilde\phi$ computed via Simpson's rule; on the right the analogous results with $\tilde\phi$ computed via Clenshaw-Curtis algorithm and barycentric interpolation.}
\end{figure}

Fig.\ \ref{1ORDfig} shows the results for the first order method. 
Plotted are the $L^\infty(0,1)$--errors 
%Plotted are the $L^2(0,1)$--errors 
of the numerical solution $\{U_n\}$ as a function of $h$ for 5 values of $\eps$. The reference solutions were obtained with the same method, but on a much finer grid. 
For simplicity we shall refer in our discussion to the error estimate \eqref{error_U}. The left plot is obtained with the Simpson rule to compute $\tilde\phi$. For $\eps=10^{-1}$ the second term (i.e.\ the WKB-error) in \eqref{error_U} dominates and the method is clearly first order in $h$, as indicated by the upper slope triangle. 
For $\eps=10^{-2}$ one observes this behavior for $h\le 10^{-2}$.
For smaller values of $\eps$ and large step sizes (e.g.\ $h=1$) the 
error behaves like the first error term in \eqref{error_U}, i.e. 
$h^4/\eps$ due to the Simpson rule, as visualized by the lower slope 
triangle. This leads to an inversion of the 5 error curves at $h=1$ 
($\eps=10^{-5}$ at the top, $\eps=10^{-1}$ at the bottom). However, 
for small step sizes (e.g.\ $h=10^{-6}$) the error term 
$\eps^2\min(\eps,h)$ dominates, such that the inversion of the 5 
error curves w.r.t.\ $\eps$ disappears. But for small values of $h$ 
the error curves are also polluted by round-off errors (due to the 
double precision computations in Matlab). For the phase computation, 
the composite Simpson rule has a worse conditioning than the spectral 
method. Hence the former increases the effect of round-off errors 
here, see the remark in the previous section. Thus 
the error reaches  the saturation level for the Simpson rule at 
higher values of the error than for the 
spectral method. This is also the reason why smaller errors can be 
reached for small $\eps$ and small $h$ in the right figure of 
Fig.~\ref{1ORDfig}. In the left figure, it can be recognized that the 
errors  in the phase computation lead to an effective increase of 
the numerical error with decreasing $h$, and the $\eps$ 
dependence of the related term on (\ref{error_U}) implies that this is 
mainly visible for values of $\eps<10^{-2}$. 

The right plot in Fig.\ \ref{1ORDfig} is obtained with the spectral 
method for $\tilde\phi$, and it reveals that the problematic first term in \eqref{error_U} has been essentially eliminated. As shown by the slope triangle, the method is first order in $h$. For large step sizes (e.g.\ $h=1$) the error behaves like $\calO(\eps^3)$, and for small step sizes (e.g.\ $h=10^{-6}$) roughly like $\calO(\eps^2)$, as predicted by \eqref{error_U}.\\

\begin{figure}[htbp]
\begin{center}
\hspace{-13mm}
\includegraphics[width=0.54\textwidth]{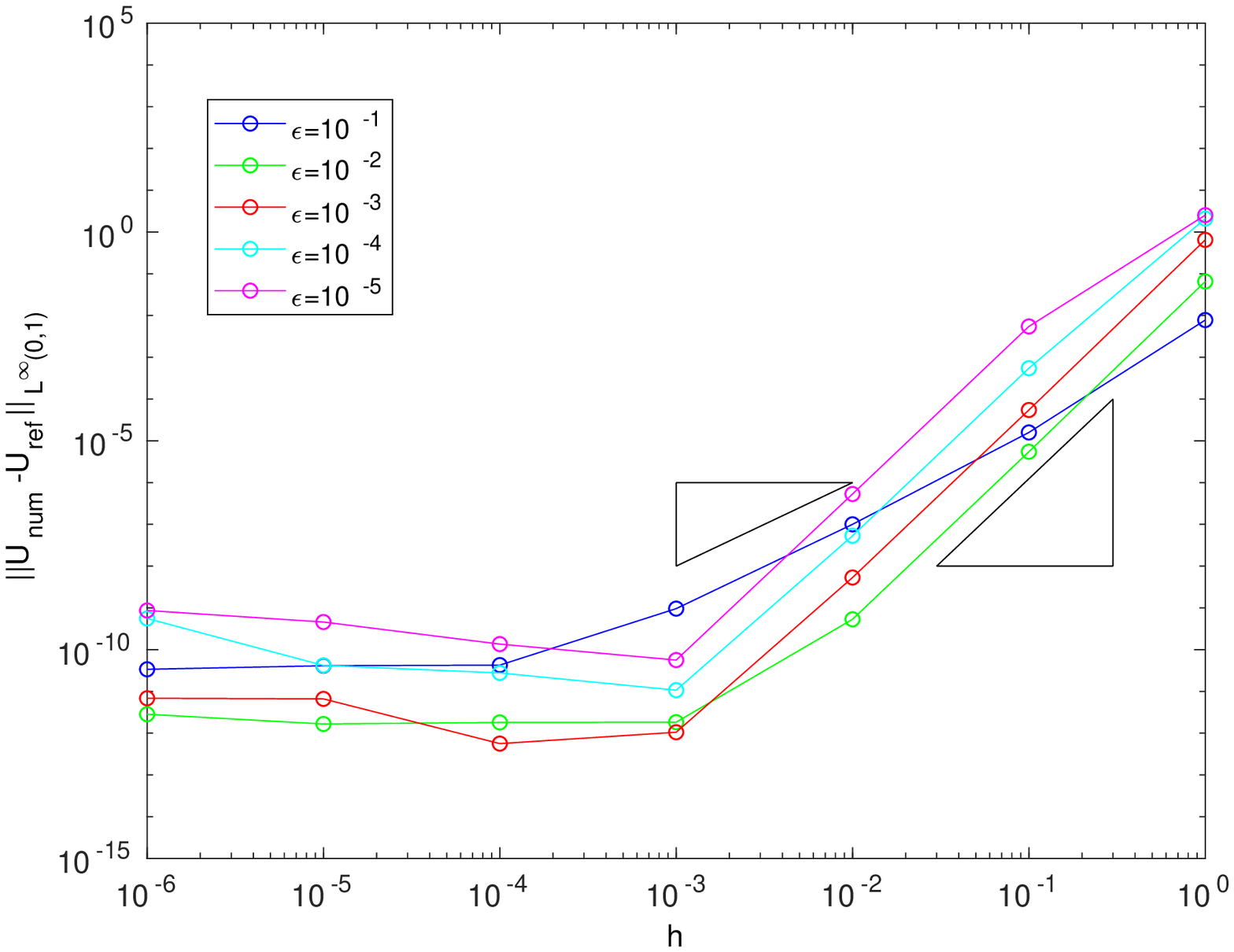}
\includegraphics[width=0.54\textwidth]{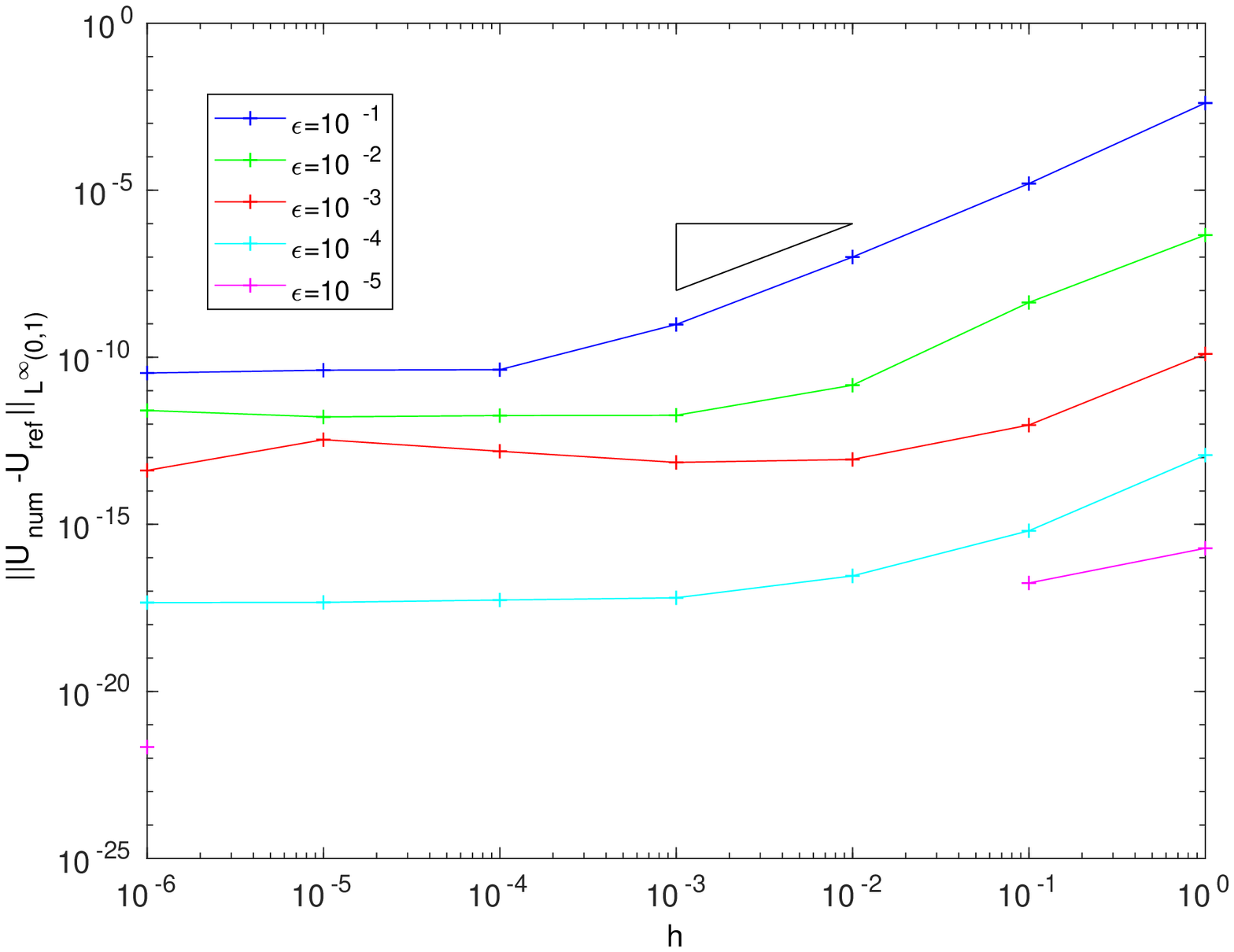}
\end{center}
\vspace{-0.3cm}
\caption{\label{2ORDfig} Error of the second order WKB-method on the interval $[0,1]$ as a function of $h$, for 5 values of $\eps$; on the left the results with the phase $\tilde\phi$ computed via Simpson's rule; on the right the analogous results with $\tilde\phi$ computed via Clenshaw-Curtis algorithm and barycentric interpolation.}
\end{figure}

Fig.\ \ref{2ORDfig} shows the results for the second order method, 
again for 5 values of $\eps$. 
%now for 4 values of $\eps$. 
Grosso modo the error behavior is similar 
to the first order method, and we shall compare it to the error 
estimate \eqref{error_U_2ORD}. Of course the errors of the second 
order method are smaller than for the first order method. Therefore, 
the WKB-error reaches the saturation level for small step sizes (usually for 
$h\le 10^{-4}$ -- $10^{-3}$) (note that the WKB-error did not reach the saturation level for the first order method in 
Fig.~\ref{1ORDfig}).

The left plot in Fig.\ \ref{2ORDfig} is obtained with the Simpson 
rule to compute $\tilde\phi$. For $\eps=10^{-1}$ the second term 
(i.e.\ the WKB-error) in \eqref{error_U_2ORD} dominates and the 
method is second order for $h\ge 10^{-3}$, as indicated by the upper 
slope triangle. Again the fact that the maximally achievable accuracy 
with the Simpson method is reached at higher values than with the 
spectral method leads to a slight increase of the error for very 
small values of $h$. Since, in this case, both errors of the WKB method and the 
Simpson integration of the phase are due to rounding errors, there is 
no simple dependence of $\eps$ and $h$ and the errors are of the 
order of $10^{-10}$.

The right plot is obtained with the spectral method for $\tilde\phi$, 
and it reveals that the problematic first term in 
\eqref{error_U_2ORD} has been eliminated again. As shown by the slope 
triangle, the method is second order in $h$ (for $h$ large, i.e.\ 
before the WKB-error reaches the saturation level). Again the better 
conditioning of the spectral method compared to the Simpson method 
allows to achieve smaller numerical errors depending on $\eps$. 
For $\eps=10^{-5}$ the error dropped below the relative machine precision for the values $h=10^{-5}-10^{-2}$. 
Therefore, Matlab's double precision could not compute a positive error value in these cases. Hence, these points are omitted in Fig.\ \ref{2ORDfig}, right.

\begin{figure}[htbp]
\begin{center}
\hspace{-13mm}
\includegraphics[width=0.54\textwidth]{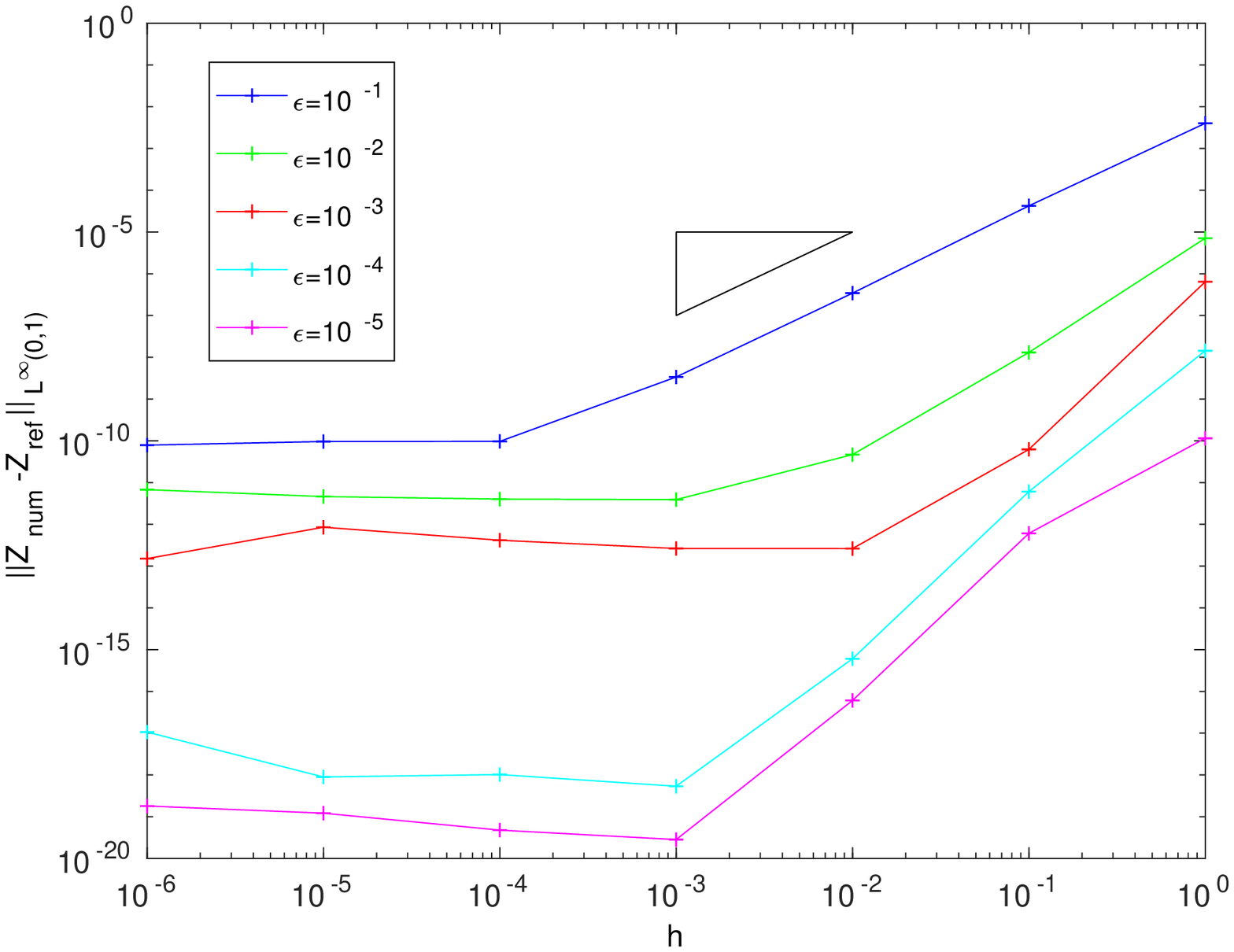}
\includegraphics[width=0.54\textwidth]{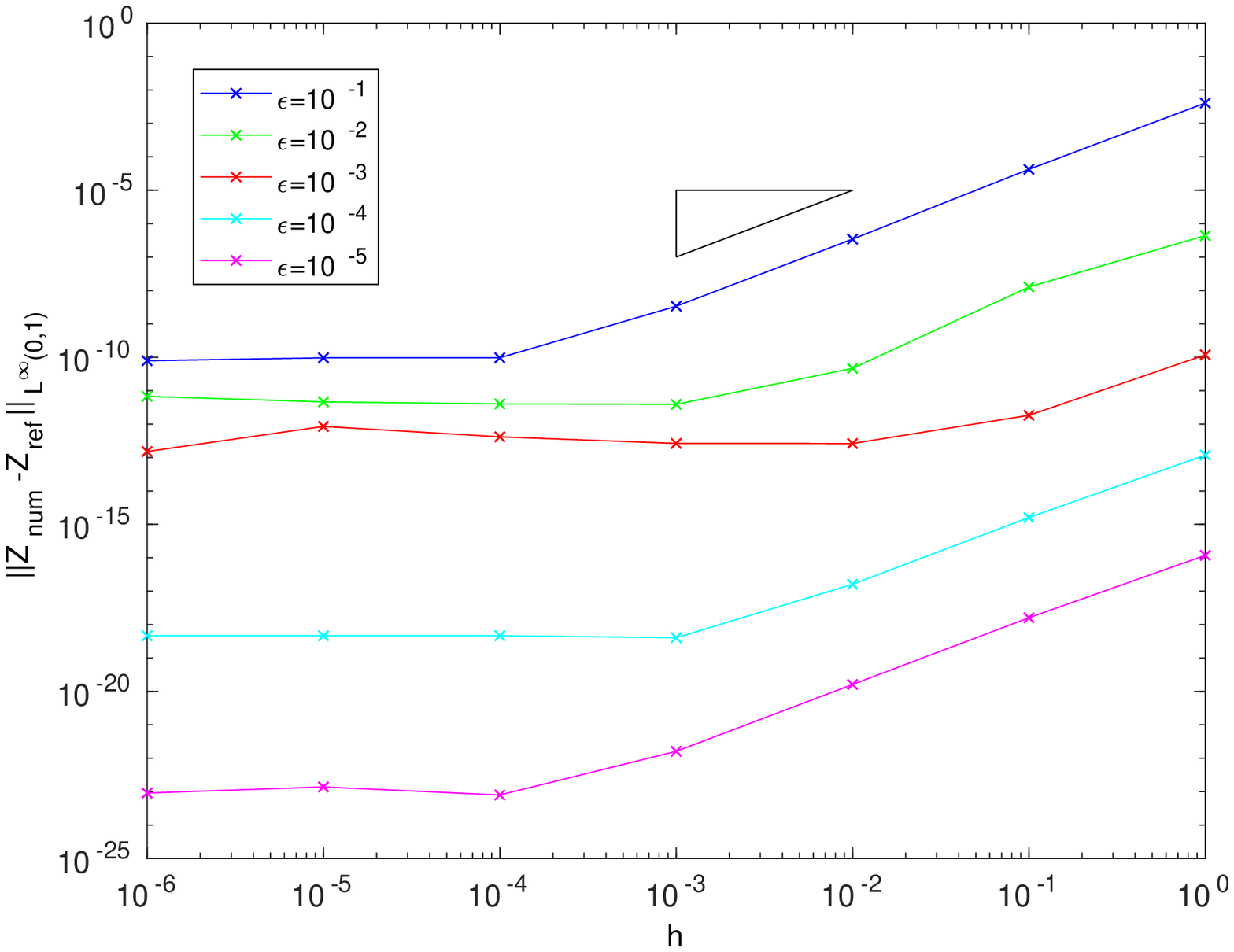}
\end{center}
\vspace{-0.3cm}
\caption{\label{2ORDfigZ} Error of the second order WKB-method on the interval $[0,1]$ as a function of $h$, for 5 values of $\eps$; on the left the results with the phase $\tilde\phi$ computed via Simpson's rule; on the right the analogous results with $\tilde\phi$ computed via Clenshaw-Curtis algorithm and barycentric interpolation.}
\end{figure}

To round off the presentation of the second order method, we present in Fig.\ \ref{2ORDfigZ} the error behavior of the transformed variable $Z$. It satisfies the error bounds from \eqref{Z-error1}, which does \emph{not} include the problematic term $\frac{E}{\eps}$. Note that the right plot includes all $h$-values in the error curve for $\eps=10^{-5}$.

%%%%%%%%%%%%%%%%%%%%%%
\Section{Conclusion} \label{SEC6}
%%%%%%%%%%%%%%%%%%%%%%

In this paper we have reviewed the numerical approach of \cite{ABN11} 
to efficiently compute highly oscillatory solutions to the 1D 
Schr\"odinger equation (\ref{EQ_ref}) for small values of the 
semiclassical parameter $\eps$. The method presented in 
\cite{ABN11} uses the leading terms of the WKB 
approximation to solutions of (\ref{EQ_ref}) to reformulate the 
problem in terms of less oscillatory functions. Two methods where given in 
\cite{ABN11}, one of 
first order and one of second order. An error analysis showed that 
the error (\ref{error_U}) has a term due to the numerical computation 
of the phase which is proportional to $1/\eps$ and thus 
problematic in the limit $\eps\to0$. In this paper we have first 
refined the error analysis by taking into account numerical errors in 
the phase computation also in the transformations (\ref{OI}) between the solution 
to the Schr\"odinger equation and the reduced system. In addition the 
phase is now computed with a spectral method showing an exponential 
decrease of the numerical error with the number of modes for 
analytical functions. The advantages of this approach with respect to 
a Simpson method are illustrated for several examples. 

An interesting question as a consequence of this work is whether the spectral approach to the 
numerical computation of the phase can be extended to the complete reduced 
system (\ref{EQZ}), i.e., whether this system can be itself 
efficiently treated with a spectral method. This is not obvious 
since the function $Z$ is known to have oscillations of twice the 
frequency as the solution $\varphi$ of the Schr\"odinger equation, 
but at much smaller amplitude. To check whether spectral methods can 
be efficient in this context will be the subject of further work.

%\bs
%\ni
%{\bf Acknowledgments.} 
\section*{Acknowledgments}
\quad The first author (AA) was supported by the FWF-doc\-tor\-al school ``Dissipation and dispersion in non-linear partial differential equations'', the bi-national FWF-project I3538-N32, 
%the \"OAD-Amadeus project ``Asymptotic behavior of kinetic and diffusion equations: analysis and numerics'', 
and a sponsorship by \emph{Clear Sky Ventures}. This work was 
supported by the  ANR-FWF project ANuI. CK thanks for support by the isite BFC project 
NAANoD, the ANR-17-EURE-0002 EIPHI and by the 
European Union Horizon 2020 research and innovation program under the 
Marie Sklodowska-Curie RISE 2017 grant agreement no. 778010 IPaDEGAN. 
%\bs
%\bs

\end{document}